\documentclass[10pt,a4paper]{article}
\usepackage{titlesec}
\usepackage{enumitem}
\usepackage{url}
\usepackage{hyperref}
\titleformat*{\section}{\centering\bfseries\large}
\titleformat*{\subsection}{\centering\bfseries}
\usepackage{amsmath}
\usepackage[arrow, matrix, curve]{xy}
\usepackage{amssymb}
\usepackage{a4wide}
\usepackage{tikz}
\usepackage{amsthm}
\usepackage{latexsym}
\newtheorem{thm}{Theorem}[section]
\newtheorem{lemma}[thm]{Lemma}
\newtheorem{cor}[thm]{Corollary}

\theoremstyle{definition}
\newtheorem{definition}[thm]{Definition}
\theoremstyle{remark}
\newtheorem{remark}[thm]{Remark}

%\numberwithin{equation}{section}

\DeclareMathOperator{\PGl}{PGL}
\DeclareMathOperator{\PSl}{PSL}
\DeclareMathOperator{\SL}{SL}
\DeclareMathOperator{\Gl}{GL}
\DeclareMathOperator{\Nrd}{Nrd}
\DeclareMathOperator{\aut}{Aut}
\newcommand{\CC}{\mathbb C}
\newcommand{\DD}{\mathbb D}
\newcommand{\PP}{\mathbb P}
\newcommand{\QQ}{\mathbb Q}
\newcommand{\RR}{\mathbb R}
\newcommand{\ZZ}{\mathbb Z}
\newcommand{\HH}{\mathbb H}

\newcommand{\NN}{\mathbb N}

\newcommand{\pP}{\mathfrak p}
\newcommand{\ppP}{\mathfrak P}
\newcommand{\qQ}{\mathfrak q}
\newcommand{\OO}{\mathcal O}

\begin{document}
\title{Varieties of general type with the same Betti numbers as $\PP^1\times \PP^1\times\ldots\times \PP^1$}
\author{Amir D\v{z}ambi\'c\footnote{Institut f\"ur Mathematik, Goethe-Universit\"at Frankfurt a.M., Postfach 111932, Fach 187, 60054 Frankfurt a.M. Email: dzambic@math.uni-frankfurt.de}}
%\date{}
\maketitle
\begin{abstract}
\noindent We study quotients $\Gamma\backslash \HH^n$ of the $n$-fold product of the upper half plane $\HH$ by irreducible and torsion-free lattices $\Gamma < \PSl_2(\RR)^n$ with the same Betti numbers as the $n$-fold product $(\PP^1)^n$ of projective lines. Such varieties are called \emph{fake products of projective lines} or \emph{fake $(\PP^1)^n$}. These are higher dimensional analogs of fake quadrics. In this paper we show that the number of fake $(\PP^1)^n$ is finite (independently of $n$), we give examples of fake $(\PP^1)^4$ and show that for $n>4$ there are no fake $(\PP^1)^n$ of the form $\Gamma\backslash \HH^n$ with $\Gamma$ contained in the norm-1 group of a maximal order of a quaternion algebra over a real number field.  
\end{abstract}
\noindent \textbf{2000 Mathematics Subject Classification} 11F06, 22E40\\
\noindent \textbf{Keywords} Varieties of general type, Betti numbers, arithmetic groups, quaternion algebras 

\section{Introduction}
After their classification of fake projective planes (see \cite{PrasadYeung07} and \cite{PrasadYeung10}) and the study of arithmetic fake projective spaces and fake Grassmannians (see \cite{PrasadYeung09}), in their paper \cite{PrasadYeung12}, G.~Prasad and S.-K.~Yeung introduced the general notion of a fake compact hermitian symmetric space and studied these spaces in detail. Let $X=G/K$ be a hermitian symmetric space of non-compact type and $\widehat{X}$ the compact dual of $X$. Then, by definition, a quotient $X_{\Gamma}=\Gamma\backslash X$ of $X$ by a cocompact and torsion-free discrete subgroup is a \emph{fake compact hermitian symmetric space or fake $\widehat{X}$} if $X_{\Gamma}$ has the same Betti numbers as $\widehat{X}$; such a fake $\widehat{X}$ is called irreducible, resp.~arithmetic, if $\Gamma$ is irreducible, resp.~arithmetic. One of the main results in \cite{PrasadYeung12} is the statement that there are no compact irreducible arithmetic fake hermitian symmetric spaces of type other than $A_{n}$ with $n\leq 4$. Here, the type of $X_{\Gamma}$ refers to the type of the irreducible factors $X_i$ of the universal covering $X=X_1\times\ldots \times X_s$ of $X_{\Gamma}$ according to Cartan's classification of irreducible hermitian symmetric spaces of non-compact type. Two-dimensional fake $A_2$ are exactly the fake projective planes, which are completely classified. In \cite{PrasadYeung09} we find examples of four-dimensional fake $A_2$, that is, fake products of projective planes $\PP^2\times \PP^2$. Note that the case of fake $A_1$ is not covered in \cite{PrasadYeung12}. By definition, a compact fake $A_1$ of dimension $n$ (called \emph{$n$-dimensional fake product of projective lines} in sequel) is a compact quotient $\Gamma\backslash \HH^n$ of the product of $n$ copies of the complex upper half plane $\HH$ by a cocompact torsion-free lattice $\Gamma\subset \PSl_2(\RR)^n$ with the same Betti numbers as the product $\PP^1\times\ldots\times\PP^1$ of $n$ copies of the complex projective line $\PP^1=\PP^1(\CC)$. The notion of a $n$-dimensional compact fake $A_1$ is meaningful only for $n\geq 2$ and in this case an irreducible fake $A_1$ is automatically arithmetic. Two-dimensional fake $A_1$ are also known as \emph{fake quadrics} (\cite{HirzebruchWerke1}, p.~779 f). There are many known irreducible as well as non-irreducible fake quadrics (see for instance \cite{Shavel78}, or \cite{dz11} for irreducible case and \cite{BCG08} for non-irreducible case). It is known that no fake products of projective lines of odd dimension are possible. In this note we study the existence questions of irreducible compact fake products of projective lines:\\

\noindent \textbf{Theorem A} (see Theorem \ref{finiteness})
There exists a constant $c>0$ such that for any integer $n>c$ there exists no irreducible fake $(\PP^1)^n$. The number of fake $(\PP^1)^n$ is finite.\\

\noindent Moreover we discuss the existence of irreducible fake products of projective lines with the fundamental group contained in the norm-1 group of a maximal order and show:\\

\noindent \textbf{Theorem B} (see Lemma \ref{leq6} and Theorems \ref{no6}, \ref{noexample3}, \ref{example1} and \ref{example2})
There exist two non-isomorphic 4-dimensional irreducible fake products of projective lines of the form $\Gamma\backslash \HH^4$ where $\Gamma$ is the norm-one group of a maximal order in totally indefinite quaternion algebras over the maximal totally real subfields of cyclotomic fields $\QQ(\xi_{20})$ and $\QQ(\xi_{24})$ of $20$-th and $24$-th roots of unity. Moreover, fake products of projective lines whose fundamental group is contained in a norm-one group of a maximal order exist only in dimension $2$ and $4$. In dimension $4$, the examples above are the only such examples up to isomorphism. \\

\noindent \textbf{Aknowledgments.} The author would like to thank Sai-Kee Yeung and Mikhail Belolipetsky for the helpful comments and discussions on the subject.
%presenting two examples, which are constructed as quotients of $\HH^4$ by an irreducible arithmetic lattice defined over maximal real subfields of cyclotomic fields $\QQ(\xi_{20})$ and $\QQ(\xi_{24})$ of $20$-th and $24$-th roots of unity, see Section \ref{examples}. Moreover we discuss the existence         

\section{Irreducible quotients of the polydisc}
Let $\HH$ denote the complex upper half plane and $\HH^n=\HH\times\ldots\times\HH$ the product of $n$ copies of $\HH$. Since $\HH$ is biholomorphically equivalent to the unit disc $\DD\subset \CC$, we will use the short term \emph{polydisc} to name $\HH^n$. The group $\Gl_2^+(\RR)^n=\Gl_2^+(\RR)\times\ldots\times\Gl_2^+(\RR)$, with $\Gl_2^+(\RR)$, the group of $2\times 2$-matrices with positive determinant, acts on $\HH^n$ componentwise as a group of linear fractional transformations. The quotient $\PGl_2^+(\RR)^n=\left(\Gl_2^+(\RR)/\RR^*\right)^n\cong\PSl_2(\RR)^n$ is identified with $\aut^+(\HH^n)$, the group of biholomorphic automorphisms, which preserve each factor. A lattice $\Gamma\subset \aut^+(\HH^n)$, that is, a discrete subgroup of finite covolume, is called \emph{irreducible}, if $\Gamma$ is not commensurable with a product $\Gamma_1\times\Gamma_2$ of two non-trivial lattices $\Gamma_1\subset \aut^+(\HH^r)$, $\Gamma_2\in\aut^+(\HH^{n-r})$ for some $0<r<n$. For $n\geq 2$, irreducible lattices can be constructed arithmetically in the following way:\\

Let $k$ be a totally real number field of degree $m=[k:\QQ]$ and let $A$ be a quaternion algebra over $k$. Choosing an ordering $v_{\infty,1},\ldots,v_{\infty,m}$ of the infinite places of $k$, assume that $A$ is unramified at first $n\leq m$ places $v_{\infty,1},\ldots,v_{\infty,n}$ and ramified at the remaining infinite places $v_{\infty,n+1},\ldots,v_{\infty,m}$. Additionally assume that $A$ ramifies at the finite places $\pP_1,\ldots,\pP_r$ of $k$. Let $d_A=\prod_{i=1}^r \pP_i$ denote the reduced discriminant of $A$. Then, $m-n+r\equiv 0\bmod 2$, and under this condition, $A$ is uniquely determined by the choice of $n$ unramified infinite places and the reduced discriminant $d_A$ up to isomorphism. We will write $A=A(k;m,n,d_A)=A(k;m,n,\pP_1,\ldots,\pP_r)$ to denote the isomorphism class of such $A$. If $\OO\subset A$ is an order in $A$, then, under the embeddings $A\hookrightarrow A\otimes_{k}k_{v_{\infty,i}}\cong M_2(\RR)$, for $i=1,\ldots, n$, the group $\OO^+=\{x\in \OO\mid \Nrd(x)\ \text{is}\ \text{totally positive unit in}\ k \}$ becomes a subgroup in $\Gl_2^+(\RR)^n$ where $\Nrd$ denotes the reduced norm. The group $\Gamma_{\OO}^+=\OO^+/k^{\ast}$ is a lattice in $\aut^+(\HH)^n$ and, moreover, $\Gamma_{\OO}^+$ is an irreducible lattice and so is every $\Gamma\subset \aut^+(\HH^n)$ which is commensurable with $\Gamma_{\OO}^+$. We say that $\Gamma\subset\aut^+(\HH^n)$ is an \emph{arithmetic lattice}, if there exits a number field $k$, a quaternion algebra $A=A(k;m,n,d_A)$ and a maximal order $\OO$ in $A$ such that $\Gamma$ is commensurable with $\Gamma_{\OO}^+$. For example $\Gamma_{\OO}^1=\{x\in \OO\mid \Nrd(x)=1 \}/\{\pm 1\}$ or any congruence subgroup in $\Gamma_{\OO}^+$ or $\Gamma_{\OO}^1$ is, by definition, arithmetic. By the celebrated theorem of Margulis, for $n\geq 2$, every irreducible lattice $\Gamma\in \aut^+(\HH)^n$ is an arithmetic lattice.\\

Let $\Gamma$ be an irreducible lattice in $\aut^+(\HH^n)$ commensurable with $\Gamma_{\OO}^+$, where $\OO$ is a maximal order in the quaternion algabra $A=A(k;m,n,d_A)$. As a corollary to Godement compactness criterion we have that $\Gamma$ is a cocompact lattice in $\aut^+(\HH^n)$ if and only if $A$ is a division algebra, or equivalently if and only if $\Gamma$ is not commensurable with a Hilbert modular group. Let $X_{\Gamma}=\Gamma\backslash \HH^n$ be the orbit space under the action of $\Gamma$ on $\HH^n$. Then, $X_{\Gamma}$ is a compact locally symmetric space. If $\Gamma$ is torsion-free, $X_{\Gamma}$ is a $n$-dimensional complex manifold and even more, in this case $X_{\Gamma}$ is a smooth projective variety whose canonical line bundle is ample. In particular, for torsion-free and cocompact lattices $\Gamma$, $X_{\Gamma}$ is a variety of general type.\\
By Hirzebruch's proportionality theorem, numerical invariants of $X_{\Gamma}$ are closely related to the numerical invariants of the compact dual $(\PP^1)^n$ of $\HH^n$ (see \cite{Hirzebruch58}). In fact, only the knowledge of the Euler number $e(X_{\Gamma})$ determines the complete Hodge diamond of $X_{\Gamma}$ (see \cite{MatsushimaShimura63} or Lemma \ref{matsshim} below). On the other hand, the Euler number can be computed by a volume formula which generalizes Siegel's formula for the volume of the fundamental domain of $\SL_2(\ZZ)$ in terms of Riemann zeta function. Namely, consider the $\aut^+(\HH)^n$-invariant volume form $\nu=(-2\pi)^{-n}\prod_{1\leq i\leq n} dx_i\wedge dy_i/y_i^2$ on $\HH^n$. Then, for torsion-free cocompact lattices $\Gamma\subset \aut^+(\HH^n)$ we have $e(X_{\Gamma})=vol(X_{\Gamma})$, where $vol(X_{\Gamma})$ denotes the volume of a $\Gamma$-fundamental domain in $\HH^n$ with respect to $\nu$.   

\begin{lemma}[see \cite{Vigneras76}]
\label{euler_zahl}
Let $k$ be a totally real number field of degree $m=[k:\QQ]$ and let $A=A(k;m,n,d_A)$ be a quaternion algebra
over $k$. Fix a maximal order $\OO\subset A$ and let $\Gamma\subset \PGl_2^+(\RR)^n$ be a subgroup commensurable with $\Gamma_{\OO}^1=\OO^1/\pm 1$. Then 
$$
vol(X_{\Gamma})=[\Gamma_{\OO}^1:\Gamma](-1)^{m+n}2^{n-m+1}\zeta_k(-1)\prod_{\pP\mid d_A}(N\pP-1)
$$
where $[\Gamma_{\OO}^1:\Gamma]=[\Gamma_{\OO}^1:\Gamma\cap\Gamma_{\OO}^1]/[\Gamma:\Gamma\cap \Gamma_{\OO}^1]$ denotes the generalized index between $\Gamma$ and $\Gamma_{\OO}^1$, $\zeta_k(\ )$ is the Dedekind zeta function and where for an integral prime ideal $\pP$ in $k$ $N\pP=|\OO_k/\pP|$ is the norm of $\pP$.
\end{lemma}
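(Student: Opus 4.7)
The plan is to follow Vignéras's adelic strategy. First, since covolume scales multiplicatively under the generalized index, it is enough to prove the formula for $\Gamma=\Gamma_{\OO}^1$; the factor $[\Gamma_{\OO}^1:\Gamma]$ then reappears via
$$vol(X_\Gamma)=\frac{[\Gamma_{\OO}^1:\Gamma\cap\Gamma_{\OO}^1]}{[\Gamma:\Gamma\cap\Gamma_{\OO}^1]}\,vol(X_{\Gamma_{\OO}^1}).$$

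Second, I pass to the adelic picture. Let $G=A^1$ be the algebraic $k$-group of reduced-norm-one elements. Since $A$ is a division algebra and since $G(k_{v_{\infty,i}})=\SL_2(\RR)$ is non-compact for $i\leq n$, strong approximation applies and yields $G(\mathbb A)=G(k)\cdot G_\infty\cdot \prod_{v\,\text{finite}}\OO_v^1$, where $G_\infty=\SL_2(\RR)^n\times \SL_1(\HH)^{m-n}$. A fundamental domain for $G(k)$ in $G(\mathbb A)$ therefore decomposes as a $\Gamma_{\OO}^1$-fundamental domain in $\SL_2(\RR)^n$ times the compact factor $\SL_1(\HH)^{m-n}\times\prod_v\OO_v^1$, and the Weil--Kneser identity $\tau(G)=1$ converts this into an explicit formula for $vol(X_{\Gamma_{\OO}^1})$.

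Third, I evaluate the local contributions with respect to standard Haar measures: each ramified archimedean place inserts the known volume of $\SL_1(\HH)\cong \mathrm{SU}(2)$; each unramified finite place $v$ contributes $(1-N\pP^{-2})^{-1}$ through $\OO_v^1\cong\SL_2(\OO_{k,v})$; and at each ramified finite place the deviation from that Euler factor produces the extra factor $(1-N\pP^{-1})\cdot N\pP^{-1}$. Multiplying over all finite places, the product assembles into $d_k^{3/2}\,\zeta_k(2)\prod_{\pP\mid d_A}(N\pP-1)$ up to an explicit power of $2$ and $\pi$. Converting from Tamagawa-normalized Haar measure to the measure induced by $\nu=(-2\pi)^{-n}\prod dx_i\wedge dy_i/y_i^2$ on $\aut^+(\HH^n)$ produces the sign $(-1)^n$ and further powers of $2\pi$, and the passage from the norm-one group to its image modulo $\pm 1$ brings in another factor of $2$.

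Finally, I invoke the functional equation of $\zeta_k$, which for a totally real field of degree $m$ specializes at $s=2$ to
$$\zeta_k(-1)=(-1)^m\,2^{-m}\pi^{-2m}\,d_k^{3/2}\,\zeta_k(2),$$
and substitute $d_k^{3/2}\zeta_k(2)=(-1)^m 2^m\pi^{2m}\zeta_k(-1)$ into the previous expression. Collecting all powers of $2$ and $\pi$ that arose from the local factors, the measure conversion, and the functional equation, the powers of $\pi$ cancel and the remaining constant reduces to $2^{n-m+1}$, while the signs combine to $(-1)^{m+n}$, yielding the stated formula. I expect the main obstacle to be precisely this bookkeeping of normalizations: carefully tracking powers of $2$ and $\pi$ through the adelic-to-classical measure conversion and through the functional equation so that the final constant matches, since at each of those steps the literature uses incompatible normalization conventions.
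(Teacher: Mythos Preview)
The paper does not supply its own proof of this lemma: it is stated with the attribution ``see \cite{Vigneras76}'' and used as a black box throughout. Your sketch follows precisely the adelic Tamagawa-number approach that underlies Vign\'eras's original derivation, so there is nothing to compare against and your outline is appropriate; the caveat you flag about normalization bookkeeping is exactly the point where a full write-up would require care.
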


\subsection{Fake products of projective lines whose universal covering is the polydisc}

Let $\Gamma$ be a torsion-free cocompact irreducible lattice in $\aut^+(\HH^n)$. As mentioned above, the quotient $X_{\Gamma}=\Gamma\backslash \HH^n$ is a $n$-dimesional smooth projective variety of general type whose Betti numbers are closely related to the Betti numbers of $(\PP^1)^n$. 

\begin{definition}
We say that a compact quotient $X_{\Gamma}$ of the $n$-dimensional polydisc $\HH^n$ is a \emph{compact fake product of projective lines} (or simply \emph{fake $(\PP^1)^n$}) if $\Gamma$ is a cocompact and torsion-free lattice in $\aut^+(\HH^n)$ and $X_{\Gamma}=\Gamma\backslash \HH^n$ has the same Betti numbers as $(\PP^1)^n$. We say that $X_{\Gamma}$ is \emph{irreducible} if $\Gamma$ is an irreducible lattice in $\aut^+(\HH^n)$.  
\end{definition}

\begin{remark}
\begin{enumerate}
\item As already mentioned in the introduction, many examples of irreducible and non-irreducible fake $\PP^1\times \PP^1$ (fake quadrics) are known. The non-irreducible fake quadrics are examples of so-called Beauville surfaces. In higher dimensions, excluding the ``trivial'' cases of products of fake quadrics, no examples of fake $(\PP^1)^n$ seem to be known. It is an open question to construct such higher dimensional fake $( \PP^1)^n$ which are not products of fake $(\PP^1)^n$ in lower dimension. In this paper we will concentrate on irreducible fake $(\PP^1)^n$. But other ``non-trivial'' constructions are to be analyzed. In particular, this includes the construction of higher dimensional varieties isogenous to a product of curves (see \cite{Catanese00}) with given Betti numbers which are not products of Beauville surfaces. 

\item More generally, we could skip the condition on the universal covering, and define a fake $(\PP^1)^n$ as a variety of general type with the same Betti numbers as $(\PP^1)^n$. Here an open question is to identify the universal covering of such a variety; is the universal covering of a variety of general type with the same Betti numbers as $(\PP^1)^n$ always $\HH^n$? (compare \cite{HirzebruchWerke1}, p.~780).     
\end{enumerate}
\end{remark}

A theorem of Matsushima-Shimura gives a characterization of irreducible fake products of projective lines:

\begin{lemma}[\cite{MatsushimaShimura63}, Theorem 7.2 and Theorem 7.3]
\label{matsshim}
Let $\Gamma$ be an irreducible and torsion-free lattice in $\aut^+(\HH^n)$ and let $b_i(X_{\Gamma})$ denote the $i$-th Betti number of $X_{\Gamma}$. Then
\begin{itemize}
\item For $i\neq n$, $b_i(X_{\Gamma})=b_i(\left (\PP^1 \right)^n)$
\item $b_n(X_{\Gamma})=b_n(\left (\PP^1 \right)^n)+2^n h^{n,0}(X_{\Gamma})$, where $h^{n,0}(X_{\Gamma})=\dim H^0(X_{\Gamma},\Omega_{X_{\Gamma}}^n)$.
\end{itemize}
If $n=\dim X_{\Gamma}$ is odd, $X_{\Gamma}$ cannot be a fake product of projective lines. For $n$ even, $X_{\Gamma}$ is a fake product of projective lines if and only if the arithmetic genus $\chi(X_{\Gamma})$ equals to $1$.
\end{lemma}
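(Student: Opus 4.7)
My plan is to decompose $H^*(X_\Gamma,\CC)$ via the representation theory of $G=\PSl_2(\RR)^n$, read off both Betti and Hodge numbers, and then combine the resulting formula for $\chi(X_\Gamma)$ with Hirzebruch proportionality. Fix a maximal compact $K\subset G$ and let $\mathfrak g$ denote the complexified Lie algebra. Matsushima's formula gives
$$H^k(X_\Gamma,\CC)\;\cong\;\bigoplus_\pi m(\pi,\Gamma)\,H^k\!\bigl(\mathfrak g,K;\,H_\pi^\infty\bigr),$$
where $\pi$ runs over irreducible unitary representations of $G$ and $m(\pi,\Gamma)$ is its multiplicity in $L^2(\Gamma\backslash G)$. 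Since $\pi=\bigotimes_i\pi_i$ and the $(\mathfrak g,K)$-cohomology factorizes, the first task is to classify the cohomological $\pi_i$ for $\PSl_2(\RR)$: these are the trivial representation (contributing $\CC$ in degrees $0$ and $2$, of Hodge types $(0,0)$ and $(1,1)$) and the holomorphic/antiholomorphic discrete series $D_2^\pm$ of lowest $K$-type $2$ (each contributing $\CC$ in degree $1$, of Hodge type $(1,0)$ resp.~$(0,1)$).

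The central step -- and what I expect to be the main obstacle -- is the Matsushima--Shimura vanishing: for our cocompact irreducible $\Gamma$, any \emph{mixed} $\pi=\bigotimes\pi_i$ (trivial on some factors and a discrete series on others) satisfies $m(\pi,\Gamma)=0$. Intuitively a nonzero mixed contribution would provide a $\Gamma$-invariant factor splitting of a cohomology class, incompatible with $\Gamma$ not being commensurable with a product. Granting this vanishing, only the trivial $\pi$ and the $2^n$ purely discrete series $\bigotimes_i D_2^{\epsilon_i}$, $\epsilon_i\in\{\pm\}$, contribute. The trivial $\pi$ reproduces $H^*((\PP^1)^n)$, which sits entirely in Hodge types $(p,p)$, while each $\bigotimes_i D_2^{\epsilon_i}$ contributes a single class in degree $n$ of Hodge bidegree $(p,n-p)$, where $p$ is the number of $+$ signs. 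A conjugation/symmetry argument on $L^2(\Gamma\backslash G)$ identifies all $2^n$ multiplicities $m\bigl(\bigotimes_iD_2^{\epsilon_i},\Gamma\bigr)$ with the common value $h^{n,0}(X_\Gamma)=\dim H^0(X_\Gamma,\Omega_{X_\Gamma}^n)$, realized by the all-holomorphic tensor. Summing the $\binom{n}{p}$ sign patterns producing bidegree $(p,n-p)$ yields
$$b_n(X_\Gamma)=b_n\bigl((\PP^1)^n\bigr)+\sum_{p=0}^n\binom{n}{p}h^{n,0}(X_\Gamma)=b_n\bigl((\PP^1)^n\bigr)+2^n h^{n,0}(X_\Gamma),$$
while for $i\neq n$ only the trivial $\pi$ contributes, so $b_i(X_\Gamma)=b_i((\PP^1)^n)$.

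Finally, the same analysis forces $h^{p,0}(X_\Gamma)=0$ for $0<p<n$, whence
$$\chi(X_\Gamma)=\sum_{p=0}^n(-1)^p h^{p,0}(X_\Gamma)=1+(-1)^n h^{n,0}(X_\Gamma).$$
Hirzebruch proportionality for the dual pair $(\HH^n,(\PP^1)^n)$ introduces the sign $(-1)^n$ in the comparison of top Chern numbers, and since $\chi((\PP^1)^n,\mathcal O)=1$ it forces $(-1)^n\chi(X_\Gamma)>0$. When $n$ is odd this gives $\chi(X_\Gamma)<0$, hence $h^{n,0}(X_\Gamma)>1$ and $b_n(X_\Gamma)>0=b_n((\PP^1)^n)$, ruling out any fake $(\PP^1)^n$ in odd dimension. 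When $n$ is even, $\chi(X_\Gamma)=1+h^{n,0}(X_\Gamma)\geq 1$, with equality iff $h^{n,0}(X_\Gamma)=0$ iff $b_n(X_\Gamma)=b_n((\PP^1)^n)$, giving the claimed characterization.
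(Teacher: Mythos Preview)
The paper does not supply its own proof of this lemma; it simply records it as a citation of Theorems~7.2 and~7.3 of Matsushima--Shimura. Your proposal goes beyond this and sketches the argument in the modern language of Matsushima's formula and $(\mathfrak g,K)$-cohomology, which is indeed the standard way to recover their theorem.

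The outline is essentially sound, but one step is softer than you make it look. Complex conjugation on $L^2(\Gamma\backslash G)$ flips \emph{all} signs $\epsilon_i$ simultaneously, giving only $m\bigl(\bigotimes_i D_2^{\epsilon_i}\bigr)=m\bigl(\bigotimes_i D_2^{-\epsilon_i}\bigr)$; it does not identify multiplicities for sign patterns that differ in a single coordinate. Flipping one sign requires the outer automorphism of the corresponding $\PSl_2(\RR)$-factor, and there is no a~priori reason this preserves $\Gamma$ even up to $G$-conjugacy. That all $2^n$ multiplicities actually coincide is a genuine theorem in Matsushima--Shimura, proved by a direct analysis of harmonic forms, not by an abstract symmetry of $L^2$; your ``conjugation/symmetry argument'' has to invoke that result rather than replace it.

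You can, however, bypass this point entirely. Once the vanishing for mixed representations gives you (i) $b_i(X_\Gamma)=b_i\bigl((\PP^1)^n\bigr)$ for $i\neq n$ and (ii) $h^{p,0}(X_\Gamma)=0$ for $0<p<n$, Hirzebruch proportionality (all Chern numbers of $X_\Gamma$ and of $(\PP^1)^n$ proportional by the same constant) yields $e(X_\Gamma)=2^n\,\chi(X_\Gamma)$. Writing
\[
e(X_\Gamma)=2^n+(-1)^n\bigl(b_n(X_\Gamma)-b_n((\PP^1)^n)\bigr),\qquad \chi(X_\Gamma)=1+(-1)^n h^{n,0}(X_\Gamma),
\]
and equating gives $b_n(X_\Gamma)=b_n((\PP^1)^n)+2^n h^{n,0}(X_\Gamma)$ directly, without ever comparing the individual discrete-series multiplicities. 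The remaining deductions (exclusion of odd $n$ via $(-1)^n\chi(X_\Gamma)>0$, and the $\chi=1$ characterisation for even $n$) are correct as written.
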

Since only $b_n(X_{\Gamma})$ may be different from $b_n((\PP^1)^n)$, we can characterize fake products of projective lines also by the value of the Euler number $e(X_{\Gamma})$.
\begin{cor}
For even $n$ a quotient $X_{\Gamma}$ is a fake $(\PP^1)^n$ if and only if $e(X_{\Gamma})=e((\PP^1)^n)=2^n$. 
\end{cor}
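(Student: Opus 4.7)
The plan is to unwind both conditions into statements about $h^{n,0}(X_\Gamma)$, using Lemma \ref{matsshim} as the only ingredient.

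First I would compute the topological Euler number as the alternating sum of Betti numbers. Since Lemma \ref{matsshim} asserts $b_i(X_\Gamma)=b_i((\PP^1)^n)$ for every $i\neq n$, the only contribution to the difference $e(X_\Gamma)-e((\PP^1)^n)$ comes from degree $n$, so
\[
e(X_\Gamma)-e((\PP^1)^n)=(-1)^n\bigl(b_n(X_\Gamma)-b_n((\PP^1)^n)\bigr)=(-1)^n\cdot 2^n h^{n,0}(X_\Gamma),
\]
again by Lemma \ref{matsshim}. For even $n$ this simplifies to $e(X_\Gamma)=2^n\bigl(1+h^{n,0}(X_\Gamma)\bigr)$, using $e((\PP^1)^n)=2^n$.

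Next, I would observe that $h^{n,0}(X_\Gamma)$ is a non-negative integer, so the displayed equality forces the equivalence $e(X_\Gamma)=2^n \iff h^{n,0}(X_\Gamma)=0$. On the other hand, by definition $X_\Gamma$ is a fake $(\PP^1)^n$ exactly when every Betti number of $X_\Gamma$ coincides with the corresponding Betti number of $(\PP^1)^n$, and by Lemma \ref{matsshim} this reduces to the single condition $b_n(X_\Gamma)=b_n((\PP^1)^n)$, i.e. $h^{n,0}(X_\Gamma)=0$. Chaining the two equivalences gives the corollary.

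There is no real obstacle here; the statement is a direct bookkeeping consequence of Lemma \ref{matsshim}, with the identity $e((\PP^1)^n)=\sum_{p=0}^n\binom{n}{p}=2^n$ (from the K\"unneth decomposition of $H^\ast((\PP^1)^n)$) being the only auxiliary input.
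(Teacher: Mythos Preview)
Your argument is correct and is precisely the reasoning the paper intends: the corollary is stated immediately after Lemma \ref{matsshim} with only the remark that ``only $b_n(X_{\Gamma})$ may be different from $b_n((\PP^1)^n)$,'' and you have simply made this one-line observation explicit by writing out the alternating sum and isolating the degree-$n$ term.
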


\section{Finiteness results on irreducible fake $(\PP^1)^n$}
In this section we will prove the finiteness result which states that the dimension of a fake $(\PP^1)^n$ is bounded by an absolute constant from above and that in each dimension there are only finitely many fake $(\PP^1)^n$. Moreover we will prove the non-existence of irreducible fake $(\PP^1)^n$ $X_{\Gamma}$ whose fundamental group $\Gamma$ is contained in a norm-1 group $\Gamma_{\OO}^1$ of maximal order of a quaternion algebra over a real number field in all dimensions $n>4$. In fact, we will list all the isomorphism classes of quaternion algebras which contain a torsion-free group $\Gamma\leq \Gamma_{\OO}^1$ such that $X_{\Gamma}$ is a $4$-dimensional fake product of projective lines. 

Recall that the fundamental group $\Gamma$ of an irreducible $n$-dimensional fake product of projective lines is an arithmetic lattice commensurable with $\Gamma_{\OO}^1$, where $\OO$ is a maximal order in a quaternion algebra $A=A(k;m,n,d_A)$ over a totally real number field $k$ of degree $m$ with prescribed ramification at infinite places. If $\Gamma$ is contained in a lattice $\Delta$ we have $vol(X_{\Delta})\leq vol(X_{\Gamma})=e(X_{\Gamma})=2^n$. Every $\Gamma$ is contained in a maximal lattice $\Delta$ with finite index, and in the first step we will show that for $n>c$ ($c$ a constant) every maximal irreducible lattice $\Delta<\PSl_2(\RR)^n$ satisfies $vol(\Delta)>2^n$. In the second step we shall show that in a fixed dimension $n$ there exist only finitely many conjugacy classes of maximal (irreducible) lattices $\Delta$ in $\PSl_2(\RR)^n$ such that $vol(\Delta)\leq 2^n$. For this purpose we will intensively make use of results in \cite{Borel81} and \cite{ChinburgFriedman86} where volumes and commensurabilities between arithmetic lattices of $\PGl_2(\RR)^a\times \PGl_2(\CC)^b$ are studied in great detail. The most relevant results on for our purposes are summarized in the following 
\begin{lemma}
\label{borelmax}
Let $k$ be totally real number field of degree $m$ and $A=A(k;m,n,\pP_1,\ldots,\pP_r)$ a quaternion algebra over $k$ with $m\geq n\geq 1$ (that is, $A$ satisfies the Eichler condition). 
\begin{enumerate}
\item (compare \cite[4.9, 8.4-8.6]{Borel81}, \cite[Lemma 2.1]{ChinburgFriedman86}) The maximal arithmetic subgroup of $A^+/k^{\ast}=\{x\in A\mid \Nrd(x)\ \text{is totally positive}\}/k^{\ast}$ which contains $\Gamma_{\OO}^1$ for a maximal order $\OO$ in $A$ is $N\Gamma_{\OO}^+=\{x\in A^+\mid x\OO x^{-1}=\OO\}/k^{\ast}$. The index of $\Gamma_{\OO}^1$ in $N\Gamma_{\OO}^+$ is $[N\Gamma_{\OO}^+:\Gamma_{\OO}^1]=2^r[k_A:k]$ where $k_A$ is the maximal abelian extension of $k$ which is unramified at all finite places of $k$ and such that its Galois group $Gal(k_A/k)$ is elementary 2-abelian and in which all the prime ideals $\pP_1,\ldots,\pP_r$ are totally split.
\item (compare \cite[4.4, 5.3]{Borel81}, \cite[11.4]{MaclachlanReid03}) Let $S$ be a finite set of primes of $k$ such that $\pP_i\notin S$ for $i=1,\ldots, r$ and let $\OO(S)$ be the Eichler order of level $\prod_{\qQ\in S} \qQ$. Let $\Gamma_{S,\OO}^+$ be the normalizer of $\OO(S)$ in $A^+/k^{\ast}$. Then every maximal arithmetic subgroup of $A^+/k^{\ast}$ is (a conjugate of) a lattice of the form $\Gamma_{S,\OO}^+$ with $\Gamma_{\emptyset,\OO}^+=N\Gamma_{\OO'}^+$ for some maximal order $\OO'$. There exists an integer $0\leq s\leq |S|$ such that the generalized index  $[N\Gamma_{\OO}^+:\Gamma_{S,\OO}^+]$ equals $2^{-s}\prod_{\qQ\in S}(N\qQ+1)$. For any $S$ we have $vol(X_{N\Gamma_{\OO}^+})\leq vol(\Gamma_{S,\OO}^+)$ with equality if and only if $S=\emptyset$. There exist only finitely many conjugacy classes of maximal lattices $\Gamma_{S,\OO}^+$ commensurable with a given $N\Gamma_{\OO}^+$ such that $[N\Gamma_{\OO}^+:\Gamma_{S,\OO}^+]\leq c$ for any given $c>0$.  

\item (compare \cite[5.4]{Borel81}) Let $\mathcal C(k,A)$ be the set of all irreducible lattices in $\PSl_2(\RR)^n$ which are commensurable with $\Gamma_{\mathcal O}^1$ for some maximal order $\mathcal O$ in $A/k$. Then the function $\Gamma\mapsto vol(X_{\Gamma})$ takes its minimum on a conjugate of $N\Gamma_{\OO}^+$. Let $e$ be the number of dyadic places of $k$ (finite places lying over $2$) not dividing the reduced discriminant of $A$. Then $vol(\Gamma)$ is a positive integral multiple of $2^{-e}vol(X_{N\Gamma_{\OO}^+})$ for any $\Gamma$ in $\mathcal C(k,A)$.
\end{enumerate}
\end{lemma}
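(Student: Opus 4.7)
The lemma collects three interrelated statements from Borel's foundational paper \cite{Borel81} together with its sharpening by Chinburg and Friedman \cite{ChinburgFriedman86}, so my plan is to reduce each part to a systematic local--global analysis using strong approximation for $A^{\times}$, valid precisely because the Eichler condition $n \geq 1$ holds. Under strong approximation, commensurability classes of irreducible arithmetic lattices correspond to double cosets in the finite adele group, and maximal arithmetic subgroups correspond to collections of maximal compact open subgroups in each local factor $A_v^{\times}/k_v^{\times}$. The whole classification thus becomes local at each finite place, tractable via the Bruhat--Tits tree of $\PGl_2(k_v)$ at the split places and the unique maximal order structure at the ramified ones.

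For part (1), I would first identify $N\Gamma_{\OO}^{+}$ with the image in $A^{+}/k^{\times}$ of the normalizer $N(\OO)$, which is arithmetic and evidently contains $\Gamma_{\OO}^{1}$. The index $[N\Gamma_{\OO}^{+} : \Gamma_{\OO}^{1}]$ is then computed by assembling local contributions via the reduced norm. At an unramified finite prime $\qQ$ the local normalizer coincides with $k_{\qQ}^{\times}\Gl_2(\OO_{k,\qQ})$ and is trivial modulo the norm-one subgroup, while at a ramified prime $\pP_i$ the unique maximal order in the local division algebra has normalizer generated by the unit group and a uniformizer, contributing one factor of $2$ per ramified prime and accounting for the $2^r$. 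The remaining factor $[k_A:k]$ is global, coming from the cokernel of the reduced norm on $N(\OO)$ inside $k^{\times}/k^{\times 2}$; class field theory identifies this cokernel with the Galois group of an elementary $2$-abelian extension of $k$ unramified everywhere and completely split at the $\pP_i$, which is by definition $k_A$.

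For part (2), I would use that every maximal compact open subgroup of $\PGl_2(k_v)$ is the stabilizer of a vertex of the Bruhat--Tits tree, and pointwise stabilizers of edges correspond to Eichler orders of prime level. Taking intersections over a finite set $S$ of primes yields an Eichler order of squarefree level $\prod_{\qQ \in S}\qQ$, and by strong approximation every maximal arithmetic subgroup of $A^{+}/k^{\times}$ is conjugate to some $\Gamma_{S,\OO}^{+}$. The generalized index formula $2^{-s}\prod_{\qQ \in S}(N\qQ + 1)$ assembles locally: at each $\qQ \in S$, the edge stabilizer has index $N\qQ + 1$ in the vertex stabilizer (the valence of the tree), while the $2^{-s}$ accounts for Atkin--Lehner type involutions swapping the two neighbouring vertices, which lie in the Eichler normalizer but not in the maximal order normalizer. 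Finiteness of conjugacy classes with $[N\Gamma_{\OO}^{+} : \Gamma_{S,\OO}^{+}] \leq c$ follows immediately since $\prod_{\qQ \in S}(N\qQ + 1) \leq 2^{|S|}c$ bounds both the cardinality of $S$ and the primes it may contain, together with standard class number finiteness for Eichler orders of bounded level.

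For part (3), the minimum statement is a direct consequence of (2): every $\Gamma \in \mathcal{C}(k,A)$ is contained in some maximal $\Gamma_{S,\OO}^{+}$, and the index formula together with $N\qQ + 1 \geq 3 > 2$ gives $2^{-s}\prod(N\qQ + 1) \geq 1$ with equality iff $S = \emptyset$, so the minimum is attained on a conjugate of $N\Gamma_{\OO}^{+}$. The integrality claim that $vol(\Gamma)$ is a positive integer multiple of $2^{-e}vol(X_{N\Gamma_{\OO}^{+}})$ requires finer local analysis at the dyadic primes $\qQ \nmid d_A$, where the residue field of characteristic two allows an additional normalizer element of non-square unit reduced norm that is not detected by the global image of the reduced norm, contributing an extra factor of $2$ per such prime and producing the $2^{-e}$ correction. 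I expect this last, innocuous-looking $2^{-e}$ factor to be the main technical obstacle, as its precise determination hinges on a careful bookkeeping of the image of the reduced norm at dyadic places, which is precisely the refinement \cite{ChinburgFriedman86} brings to Borel's original volume computations.
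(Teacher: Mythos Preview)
The paper does not supply a proof of this lemma at all; it is stated as a compendium of results from \cite{Borel81}, \cite{ChinburgFriedman86}, and \cite{MaclachlanReid03}, with pointers to the specific sections, and is then used as a black box in the proof of Theorem~\ref{finiteness}. Your proposal therefore goes well beyond what the paper does, and your overall architecture---strong approximation under the Eichler condition, local analysis via the Bruhat--Tits tree at split places and the unique maximal order at ramified ones, and the reduced-norm/class-field-theory identification of $[k_A:k]$---is the standard route through Borel's argument and is essentially correct for parts (1) and (2).

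Your explanation of the $2^{-e}$ factor in part (3), however, misidentifies the source. It is not that dyadic places admit ``an additional normalizer element of non-square unit reduced norm''; the normalizer computation at a split place is uniform in the residue characteristic. The $2^{-e}$ is instead a denominator coming straight out of the index formula in part (2): for $\qQ \in S$ with $N\qQ$ odd, the local factor $(N\qQ+1)/2$ is an integer, but for a dyadic $\qQ$ (necessarily with $\qQ \nmid d_A$ since $S$ avoids the ramified primes) one has $N\qQ+1$ odd, so the Atkin--Lehner factor $2^{-1}$ is not absorbed. Since at most $e$ primes of $S$ can be dyadic, the generalized index $2^{-s}\prod_{\qQ\in S}(N\qQ+1)$ lies in $2^{-e}\ZZ$, and hence so does $vol(\Gamma)/vol(X_{N\Gamma_{\OO}^+})$ for every $\Gamma\in\mathcal C(k,A)$. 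This is a purely combinatorial parity issue on the tree, not a reduced-norm phenomenon, and it is worth getting right since the dyadic correction reappears later in the paper (see the definition of $t,t'$ in the proof of Theorem~\ref{finiteness}).
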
 

\noindent The finiteness of the set of irreducible fake $(\PP^1)^n$ follows immediately from the following Theorem.  

\begin{thm}
\label{finiteness}
There exists a constant $c>0$ such that if $N\Gamma_{\OO}^+\hookrightarrow \PSl_2(\RR)^n$ is a maximal lattice with $vol(N\Gamma_{\OO}^+)\leq 2^n$ then $n\leq c$. For each $n$ there are only finitely many conjugacy classes in $\PSl_2(\RR)^n$ of maximal lattices $\Delta$ such that $vol(X_{\Delta})\leq 2^n$. 
\end{thm}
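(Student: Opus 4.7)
The plan is to combine the volume formula of Lemma \ref{euler_zahl} with the index calculation in Lemma \ref{borelmax}(1) to obtain
\[
vol(X_{N\Gamma_{\OO}^+})\;=\;\frac{2^{n-m+1}\,|\zeta_k(-1)|\,\prod_{\pP\mid d_A}(N\pP-1)}{2^r\,[k_A:k]},
\]
and then to show that the right-hand side grows at least exponentially in $m$ with a base strictly greater than $2$. Since $A=A(k;m,n,d_A)$ requires $m\geq n$, this will immediately yield the first assertion: $n\leq c$.

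To establish the exponential lower bound, I would: (i) use the functional equation of the Dedekind zeta function to write $|\zeta_k(-1)|=|d_k|^{3/2}\zeta_k(2)/(2^m\pi^{2m})\geq |d_k|^{3/2}/(2\pi^2)^m$; (ii) apply Odlyzko's unconditional lower bound on root discriminants of totally real fields to get $|d_k|^{1/m}\geq d_0$ for an absolute $d_0$ close to $4\pi e^{\gamma+1/2}\approx 60.84$ (with effective analogues for small $m$, handled by Minkowski's bound or explicit discriminant tables); (iii) bound $[k_A:k]$, a quotient of the narrow $2$-class group of $k$, by combining Brauer-Siegel with Zimmert's lower bound $R_k\gg c_1^m$ for the regulator of a totally real field; and (iv) bound $\prod_{\pP\mid d_A}(N\pP-1)/2^r$ from below by controlling the ramification bookkeeping, noting that the worst-case contribution comes from at most $m$ primes of residue characteristic $2$ and at most $m$ of residue characteristic $3$, each of which contributes a factor $\geq 1/2$. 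Assembling (i)--(iv) gives $vol(X_{N\Gamma_{\OO}^+})\geq \beta^m$ for an absolute $\beta>2$, so the hypothesis $vol\leq 2^n\leq 2^m$ forces $m\leq c$.

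For the second assertion, fix $n$ and let $\Delta\hookrightarrow\PSl_2(\RR)^n$ be a maximal lattice with $vol(X_\Delta)\leq 2^n$. By Lemma \ref{borelmax}(2), $\Delta$ is conjugate to some $\Gamma_{S,\OO}^+$ commensurable with a maximal $N\Gamma_{\OO'}^+$ for which $vol(X_{N\Gamma_{\OO'}^+})\leq vol(X_\Delta)\leq 2^n$. The effective form of the lower bound above then gives explicit upper bounds on the degree $m$, on $|d_k|$, and on $N(d_A)=\prod_i N\pP_i$; by Hermite's theorem, these leave only finitely many pairs $(k,A)$, and hence only finitely many commensurability classes $\mathcal{C}(k,A)$. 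Within each such class, the generalized index $[N\Gamma_{\OO'}^+:\Gamma_{S,\OO}^+]=vol(X_{\Gamma_{S,\OO}^+})/vol(X_{N\Gamma_{\OO'}^+})$ is bounded by $2^n/vol(X_{N\Gamma_{\OO'}^+})$, so the last assertion of Lemma \ref{borelmax}(2) yields only finitely many conjugacy classes of $\Gamma_{S,\OO}^+$, completing the proof.

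The main technical obstacle is step (iii): a priori the class-number factor $[k_A:k]$ can be as large as the full $2$-rank of $\mathrm{Cl}^+(k)$, so one must ensure that the exponential gain from $|d_k|^{3/2}$ strictly dominates the combined loss from $[k_A:k]$, $(2\pi^2)^m$, and the ramification factors, with a margin of more than $2^m$. This requires combining Odlyzko's and Zimmert's bounds with care, and a few small values of $m$ where the asymptotic estimates are not yet sharp must be verified by hand, using explicit tables of totally real fields of small degree and discriminant.
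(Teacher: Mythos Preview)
Your overall architecture---the volume formula, the functional equation, bounding $[k_A:k]$ via the class number, and the use of Lemma~\ref{borelmax}(2) to finish the finiteness for fixed $n$---matches the paper's proof. The second assertion is handled essentially as the paper does it.

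The gap is in your step (iii)/(iv) leading to the claim ``$vol(X_{N\Gamma_{\OO}^+})\geq \beta^m$ for an absolute $\beta>2$''. If you carry out the bookkeeping with the ingredients you list, after using Brauer--Siegel at $s=2$ and Zimmert's bound $R_k\geq c_1e^{c_2 m}$ you arrive (as in the paper's second step) at
\[
g(k,A)\;\geq\;\frac{d_k^{1/2}R_k}{2^{2m+t}\pi^{m}}\;\geq\;c_1\left(\frac{d_0^{1/2}e^{c_2}}{8\pi}\right)^{m},
\]
and you need the base to exceed $1$. With the asymptotic Odlyzko value $d_0^{1/2}\approx 7.8$ this forces $c_2>\log(8\pi/7.8)\approx 1.17$, whereas Zimmert's constant for totally real fields is well below $0.5$. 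In other words, bounding $d_k$ by Odlyzko and $h_k$ by Brauer--Siegel/Zimmert \emph{separately} loses too much; the product of the two losses exceeds the gain from $d_k^{3/2}$. The paper does not attempt this direct combination for the bound on $m$: it invokes the Chinburg--Friedman inequality \cite[Lemma~3.2]{ChinburgFriedman86},
\[
g(k,A)>0.142\exp\Bigl(0.051\,m-\tfrac{19.0745}{[k'_A:k]}\Bigr),
\]
whose proof applies Odlyzko--Poitou type explicit-formula estimates to the unramified extension $k'_A$ (so that the discriminant and the index $[k'_A:k]$ are controlled jointly rather than independently). This packaged estimate is the missing input; once $m$ is bounded via Chinburg--Friedman, your Brauer--Siegel/Zimmert argument is exactly what the paper uses to bound $d_k$ for each fixed $m$.
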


\begin{proof}
By Lemma \ref{borelmax} and Lemma \ref{euler_zahl}, the volume of $X_{N\Gamma_{\OO}^+}$ is given by  

$$
vol(X_{N\Gamma_{\OO}^+})=(-1)^{n+m}\frac{2^{n-m+1}\zeta_k(-1)}{2^r[k_A:k]}\prod_{i=1}^r(N\pP_i-1)
$$ 
Let $k'_A$ be the abelian extension of $k$ with the same properties as $k_A$ but which is additionally unramified also at all infinite places of $k$. Then (\cite[Proposition 2.1]{ChinburgFriedman86}) 
$$
[k_A:k]^{-1}=\frac{[\mathfrak o_k^{\ast}:\mathfrak o_{k,+}^{\ast}]}{2^m}[k'_A:k]^{-1}.
$$
The advantage of considering $k'_A$ instead of $k_A$ is the fact that $[k'_A:k]$ divides the class number of $k$. 
Now recall the functional equation of the Dedekind zeta function by which for a totally real number field $k$ of degree $m$ we have 

\begin{equation}
\label{functionalequation}
\zeta_k(-1)=(-1)^m 2^{-m}\pi^{-2m}d_k^{3/2}\zeta_k(2),
\end{equation}
where $d_k$ is the discriminant of $k$. Keeping in mind that $n$-dimensional fake products of projective lines exist only for even $n$ we obtain

$$
vol(X_{N\Gamma_{\OO}^+})=\frac{d_k^{3/2}\zeta_k(2)}{2^{2m-n-1+t'}[k_A:k]\pi^{2m}}\prod_{\substack{i\in\{1,\ldots,r\}\\  N\pP_i\neq 2}} \frac{N\pP_i-1}{2}
$$

\noindent where $t'$ is the number of those primes $\pP_i$ which are ramified in $A$ and such that $N\pP_i=2$. Let $t$ be the number of primes of $k$ which divide $2$. Then, $t'\leq t$. Define 
\begin{equation}
\label{gkb}
g(k,A)=\frac{d_k^{3/2}\zeta_k(2)}{2^{2m-1+t}\pi^{2m}[k_A:k]}
=\frac{d_k^{3/2}\zeta_k(2)[\mathfrak o_k^{\ast}:\mathfrak o^{\ast}_{k,+}]}{2^{3m-1+t}\pi^{2m}[k'_A:k]}.
\end{equation}
Then, 
\begin{equation}
\label{gkbvol}
vol(X_{N\Gamma_{\OO}^+})=2^{t-t'+n}g(k,A)\prod_{\substack{i\in\{1,\ldots,r\}\\  N\pP_i\neq 2}} \frac{N\pP_i-1}{2}
\end{equation}

Assume now that $2^n\geq vol(X_{N\Gamma_{\OO}^+})$. As $\prod_{\substack{i\in\{1,\ldots,r\}\\  N\pP_i\neq 2}}\frac{N\pP_i-1}{2}\geq 1$ and $t\geq t'$, the inequality 
\begin{equation}
\label{22n}
1\geq g(k,A)
\end{equation}
follows. The main ingredient at this point is a lower bound for $g(k,A)$ depending on $m$ and $[k_A:k]$ obtained by Chinburg and Friedman. In \cite[Lemma 3.2]{ChinburgFriedman86} we find the inequality
\begin{equation}
\label{chfrunten}
g(k,A)> 0.142\exp\left( 0.051\cdot m-\frac{19.0745}{[k'_A:k]}\right).
\end{equation}
The right hand side of (\ref{chfrunten}) tends to infinity for $m\rightarrow \infty$ . Thus, the condition (\ref{22n}) implies that $m$ is bounded from above and since the dimension, or equivalently the number of unramified infinite places $n$ in $A$ satisfies $n\leq m$, also $n$ is bounded. Let now $m$ be fixed. We want to show that in this case the discriminant of $k$ and reduced discriminant of $A$ are bounded. In order to do so, we first replace $[k'_A:k]$ in (\ref{gkb}) by the class number $h_k$ of $k$. Also we use the estimate on $h_k$ provided by the Brauer-Siegel theorem. Let $s>1$ be a real number, $k$ a totally real number field of degree $m$ with discriminant $d_k$, regulator $R_k$ and the class number $h_k$. Then, by the Theorem of Brauer-Siegel 
\begin{equation}
\label{BrauerSiegel}
h_kR_k\leq 2^{1-m}s(s-1)\Gamma(s/2)^m\left(\frac{d_k}{\pi^m}\right)^{s/2}\zeta_k(s).
\end{equation}
Using a lower bound for the regulator of the form $R_k\geq c_1\exp(c_2 m)$ (see for instance \cite[p.375]{Zimmert81} for an explicit choice of the constants $c_1$ and $c_2$) and choosing $s=2$ in (\ref{BrauerSiegel}) we obtain an upper bound 
$$
h_k\leq \frac{2^{2-m}d_k\zeta_k(2)}{\pi^mc_1\exp(c_2 m)}
$$
Plugging this into (\ref{22n}) we obtain
\begin{align*}
1\geq g(k,A)\geq &\frac{d_k^{3/2}\zeta_k(2)[\mathfrak o_k^{\ast}:\mathfrak o^{\ast}_{k,+}]}{2^{3m-1+t}\pi^{2m}h_k}\\
\geq & \frac{d_k^{1/2}[\mathfrak o_k^{\ast}:\mathfrak o^{\ast}_{k,+}]c_1\exp(c_2m)}{\pi^{m}2^{2m+t+1}}
\end{align*}
As $[\mathfrak o_k^{\ast}:\mathfrak o^{\ast}_{k,+}]\geq 2$ and $t\leq m$ the above inequalities give
$$
d_k^{1/2}\leq \frac{\pi^m2^{3m}}{c_1\exp(c_2m)}.
$$
For fixed $m$, the right hand side of the last inequality is constant and it follows that the discriminant $d_k$ is bounded. It follows that there are only finitely many possible totally real fields which may serve as fields defining the commensurability class $\mathcal C(k,A)$ of $N\Gamma_{\OO}^+$. If $k$ is fixed, from the equation (\ref{gkbvol}) it easily follows that if $vol(N\Gamma_{\OO}^+)$ is bounded, the reduced discriminant $d_A$ is also bounded. As the reduced discriminant determines the isomorphism class of $A$ (notice that the ramification behavior at infinite places is fixed), there are only finitely many isomorphism classes of quaternion algebras which define lattices of fake $(\PP^1)^n$. Finally, there are only finitely many non-conjugate maximal orders inside a given quaternion algebra $A$, hence only finitely many non-conjugate maximal lattices of type $N\Gamma_{\OO}^+$ inside the commensurability class $\mathcal C(k,A)$. It follows from Lemma \ref{borelmax}(2.) there are only finitely many maximal lattices $\Gamma_{S,\OO}^+\in \mathcal C(k,A)$ with $vol(X_{\Gamma_{S,\OO}^+})\leq 2^n$.   
\end{proof}

A natural question which arises from the Theorem \ref{finiteness} is that on effectivity: Is it possible to list all fake $(\PP^1)^n$? Less ambitious we could ask, to what extent one can make precise the bounds on invariants $n,m,d_k, d_A$ which belong to fake $(\PP^1)^n$? Certainly, a careful analysis of the proof of Theorem \ref{finiteness} will provide bounds on the above invariants. For instance, the equation (\ref{chfrunten}) implies that the dimension $n$ of a fake $(\PP^1)^n$ is less or equal $412$. In fact the bounds which we get are not expected to be very precise. The main hurdle is the invariant $[k_A:k]$ or rather $[k'_A:k]$ associated with $A$ for which we apparently miss good bounds and which we are forced to estimate by the class number. Some observations on this invariant have been also made by Belolipetsky and Linowitz \cite{BelolipetskyLinowitz} in connection with the enumeration of fields of definition of arithmetic Kleinian reflection groups.             
 
In order to get a first impression on how big the number of fake $(\PP^1)^n$ can be, we will now concentrate our attention to fake $(\PP^1)^n$ whose fundamental group is contained in the norm one group of a maximal order. This is a class of arithmetic groups which is much more accessible than the general ones, since the critical invariant $[k’_A:k]$ does not appear in the volume formula of such lattices. We will see soon that fake $(\PP^1)^n$ with such a fundamental group are very rare. So, from now on let us assume that:

\begin{equation}
\label{assumption} 
\textit{The lattice $\Gamma$ is contained in the norm-1 group $\Gamma_{\OO}^{1}$.}
\end{equation}

\begin{remark}
Note that the above assumption is indeed restrictive. Typical examples of lattices which are not contained in $\Gamma_{\OO}^1$ are the normalizer of maximal and Eichler orders.  
\end{remark} 
Under the assumption (\ref{assumption}) the inequality $2^ n=e(X_{\Gamma})\geq e(X_{\Gamma_{\OO}^1})$ holds.

The functional equation (\ref{functionalequation}) for the Dedekind zeta function and the Lemma \ref{euler_zahl} imply directly the equality

\begin{equation}
\label{2hochn}
e(X_{\Gamma})=2^n\geq e(X_{\Gamma_{\OO}^1})=2^{n-2m+1}\pi^{-2m}d_k^{3/2}\zeta_k(2)\prod_{\pP\mid d_A}(N\pP-1). 
\end{equation}
As next we observe that $\prod_{\pP\mid d_A}(N\pP-1)\geq 1$ as well as $\zeta_k(2)>\zeta_{\QQ}(2m)>1$. The last inequality $\zeta_k(2)>\zeta_{\QQ}(2m)$ (or more generally $\zeta_k(s)>\zeta_{\QQ}(ms)$ for any $s>1$) follows easily from the Euler product representation. Namely, 
$$
\zeta_k(2)=\prod_{\pP} \left( 1-\frac{1}{N\pP^2}\right)^{-1}=\prod_{p} \prod_{\pP\mid p} \left( 1-\frac{1}{N\pP^2}\right)^{-1},
$$ 
where $\pP$ runs through the set of non-zero prime ideals in $\OO_k$ and $p$ through all rational primes. Suppose that $\pP_1,\ldots,\pP_t$ are the prime divisors of $p\OO_k$ and $N\pP_i=p^{f_i}$. Then
$$
\prod_{\pP\mid p} \left( 1-\frac{1}{N\pP^2}\right)^{-1}=\prod_{i=1}^t\left( \frac{p^{2f_i}}{p^{2f_i}-1}\right)=\frac{p^{2(f_1+\ldots+f_t)}}{\prod_{i=1}^t (p^{2f_i}-1)}.
$$
We have an obvious inequality $\prod_{i=1}^t (p^{2f_i}-1)<p^{2(f_1+\ldots+f_t)}-1$. Hence

$$
\prod_{\pP\mid p} \left( 1-\frac{1}{N\pP^2}\right)^{-1}>\frac{p^{2(f_1+\ldots+f_t)}}{p^{2(f_1+\ldots+f_t)}-1}=\left(1-\frac{1}{p^{2(f_1+\ldots+f_t)}} \right)^{-1}.
$$
Finally, the fundamental identity for prime ideals, \cite[Proposition I.(8.2)]{NeukirchANT}, implies that $f_1+\ldots+f_t\leq m$ and we have $\prod_{\pP\mid p} \left( 1-\frac{1}{N\pP^2}\right)^{-1}>\left(1-\frac{1}{p^{2m}} \right)^{-1}$ and the stated inequality follows (Obviously, $\zeta_{\QQ}(s)=1+1/2^s+\ldots\ldots>1$ for any real $s>1$). 

Using the above inequalities, the equation (\ref{2hochn}) implies the relation 
$$
2^n>2^{n-2m+1}\pi^{-2m}d_k^{3/2}
$$
or equivalently 
\begin{equation}
\label{dkoben}
d_k<\left(\frac{(2\pi)^{2m}}{2}\right)^{2/3}.
\end{equation}
For any totally real field $K$ of degree $m$ and discriminant $d_K$ let $\delta_{K}=d_K^{1/m}$ be its so-called root discriminant and let $$\delta^{r}_{min}(m)=\min\{\delta_K\mid K\ \text{is totally real of degree}\ m\}.$$ 
The equation (\ref{dkoben}) implies that for our purposes relevant fields $k$ must satisfy $\delta_k<f(m)=\frac{(2\pi)^{4/3}}{2^{2/3m}}$. The function $f$ is increasing in $m$ but $f(m)<(2\pi)^{4/3}<11.6$ for all $m$.  A.~Odlyzko proved lower bounds for $\delta^{r}_{min}(m)$ and from his work we know that for any $m\geq 9$ the inequality $\delta^r_{min}(m)>11.823$ holds (see \cite{Odlyzkohomepage}). We conclude that $m\leq 8$. But we can improve this knowing the exact value of the minimal root discriminant of a totally real field of degree $\leq 8$ which has been determined by J. Voight in \cite[Table 3]{Voight08}. The values of $\delta^r_{min}(m)$ and $f(m)$ are compared in the Table \ref{fm}.
\begin{table}[h]
\begin{center}
\begin{tabular}{|c|c|c|c|c|c|}
\hline
$m$ & $8$ & $7$ & $6$ & $5$ & $4$  \\
\hline
$\delta^r_{min}(m)$ &11.385 &11.051 & 8.182 & 6.809 & 5.189   \\
\hline
$f(m)$ & 10.943& 10.853&10.734 & 10.570&  10.329 \\
\hline
\end{tabular} 
\end{center}
\caption{Root discriminant bounds}
\label{fm}
\end{table}

Altogether, we have
\begin{lemma}
\label{leq6}
If $\Gamma_{\OO}^1$, with $\OO$ a maximal order in the quaternion algebra $A(k;m,n,d_{A})$ as above, contains a lattice of a fake $(\PP^1)^n$, then $n\leq m=[k:\QQ]\leq 6$.   
\end{lemma}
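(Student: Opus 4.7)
The plan is to use the assumption $\Gamma\subset\Gamma_{\OO}^{1}$ to bound $e(X_{\Gamma_{\OO}^{1}})\leq 2^{n}$ and then extract an upper bound on $d_{k}$ so restrictive that it forces $m\leq 6$. First I would apply Lemma \ref{euler_zahl} to $\Gamma_{\OO}^{1}$ itself and use the functional equation (\ref{functionalequation}), together with the parity fact that $n$ is even (Lemma \ref{matsshim}), to rewrite the resulting inequality as
$$
2^{n}\;\geq\;2^{n-2m+1}\pi^{-2m}d_{k}^{3/2}\zeta_{k}(2)\prod_{\pP\mid d_{A}}(N\pP-1).
$$

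The next step is to strip away the two right-hand factors, each of which is at least $1$. The product $\prod_{\pP\mid d_{A}}(N\pP-1)$ is trivially at least $1$ (it is the empty product if $r=0$). For $\zeta_{k}(2)$ I would run an Euler-product comparison, grouping prime ideals by the rational prime $p$ they lie over and using the fundamental identity $\sum_{\pP\mid p}f(\pP/p)\leq m$ to bound the local factor at $p$ below by $(1-p^{-2m})^{-1}$, thereby showing $\zeta_{k}(2)>\zeta_{\QQ}(2m)>1$. What remains is $d_{k}<((2\pi)^{2m}/2)^{2/3}$, or equivalently, for the root discriminant $\delta_{k}=d_{k}^{1/m}$,
$$
\delta_{k}\;<\;f(m)\;:=\;\frac{(2\pi)^{4/3}}{2^{2/(3m)}}.
$$
Since $f(m)<(2\pi)^{4/3}<11.6$ for all $m$, Odlyzko's unconditional lower bound $\delta^{r}_{\min}(m)>11.823$ for $m\geq 9$ forces $m\leq 8$.

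To pass from $m\leq 8$ down to $m\leq 6$ I would consult the exact minima $\delta^{r}_{\min}(m)$ for $4\leq m\leq 8$ due to Voight and compare them with $f(m)$ in tabular form, checking that $\delta^{r}_{\min}(m)>f(m)$ for $m\in\{7,8\}$ while the inequality is still satisfiable for $m\leq 6$. The relation $n\leq m$ is immediate from the construction of $A(k;m,n,d_{A})$, in which $n$ is the number of infinite places at which $A$ is unramified among the $m$ real places of $k$. The main obstacle is the narrow margin at the top of this range: at $m=7$ one compares $\delta^{r}_{\min}(7)\approx 11.05$ against $f(7)\approx 10.85$, a gap of only $0.2$, so the argument genuinely depends on the availability of sharp tabulated minima, and any slackness in the elementary estimates (for instance the wasteful step $\zeta_{k}(2)>1$) could fail to rule out $m=7$ or $m=8$.
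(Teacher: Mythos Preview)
Your proposal is correct and follows essentially the same route as the paper: the same volume inequality from Lemma~\ref{euler_zahl} and the functional equation, the same Euler-product comparison yielding $\zeta_{k}(2)>\zeta_{\QQ}(2m)>1$, the same root-discriminant bound $\delta_{k}<f(m)=(2\pi)^{4/3}/2^{2/(3m)}$, then Odlyzko's bound to reach $m\leq 8$ and Voight's exact minima (Table~\ref{fm}) to cut down to $m\leq 6$. Your remark about the tight margin at $m=7$ is apt and matches the paper's table exactly.
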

Since the dimension $n$ of an arithmetic fake product is always less or equal the degree $m$ of the center field $k$ of the defining quaternion algebra, any irreducible fake $(\PP^1)^n$ whose fundamental group satisfies condition (\ref{assumption}) is either $2$-,$4$-, or $6$-dimensional. As already remarked, the $2$-dimensional examples are discussed in \cite{dz11} (see also references therein) in greater generality. 
Moreover we can prove
\begin{thm}
\label{no6}
There are no irreducible fake $(\PP^1)^6$ $X_{\Gamma}$ such that $\Gamma$ satisfies the condition (\ref{assumption}).
\end{thm}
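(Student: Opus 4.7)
By Lemma \ref{leq6}, the assumption $n=6$ together with $n\leq m\leq 6$ forces $m=n=6$, so $k$ is a totally real sextic field and $A=A(k;6,6,d_A)$ is totally indefinite. The parity condition $m-n+r\equiv 0\bmod 2$ becomes $r$ even, and the cocompactness of $\Gamma$ forces $A$ to be a division algebra, hence $r\geq 2$. Specialising (\ref{2hochn}) to $n=m=6$ gives
$$64 \;\geq\; 2^{-5}\pi^{-12}\,d_k^{3/2}\,\zeta_k(2)\prod_{\pP\mid d_A}(N\pP-1),$$
and, using $\zeta_k(2)>1$ and $\prod(N\pP-1)\geq 1$, the root discriminant of $k$ must satisfy $\delta_k<f(6)<10.74$.

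First I would use the tables of totally real sextic number fields of small root discriminant (Voight \cite{Voight08}, together with the LMFDB) to enumerate the finite explicit list of candidate fields $k$ with $\delta_k\leq 10.74$. For each such $k$, the value $\zeta_k(-1)$ is an explicit rational number (by Siegel's theorem, computable via the functional equation (\ref{functionalequation})), and substituting this into the volume inequality yields an explicit upper bound on $\prod_{\pP\mid d_A}(N\pP-1)$. This restricts the reduced discriminant $d_A$ to a finite list of possibilities over each $k$, leaving a finite list of candidate pairs $(k,A)$ to be dispatched.

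For each remaining pair I would compute
$$vol(X_{\Gamma_{\OO}^1})\;=\;2\,\zeta_k(-1)\prod_{\pP\mid d_A}(N\pP-1)$$
exactly, as a rational number. Any torsion-free $\Gamma\leq\Gamma_{\OO}^1$ yielding a fake $(\PP^1)^6$ would satisfy $[\Gamma_{\OO}^1:\Gamma]\cdot vol(X_{\Gamma_{\OO}^1})=64$, so $64/vol(X_{\Gamma_{\OO}^1})$ must be a positive integer. In most candidates this divisibility fails outright, as the numerator of $vol(X_{\Gamma_{\OO}^1})$ in lowest terms carries a prime factor different from $2$. In the few surviving cases I would analyse the torsion of $\Gamma_{\OO}^1$ coming from embeddings into $A$ of the quadratic cyclotomic extensions $k(\zeta_\ell)/k$, and show that no torsion-free subgroup of the small required index can avoid all such elements of finite order.

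The main obstacle is the explicit bookkeeping in the last two stages: producing all totally real sextic fields with $\delta_k\leq 10.74$, computing $\zeta_k(-1)$ for each, and checking the divisibility plus torsion analysis for every resulting pair $(k,A)$. The mechanics parallel the four-dimensional analysis carried out in Theorems \ref{noexample3}, \ref{example1} and \ref{example2} below, so the real content of the proof is verifying that in dimension six the combinatorial possibilities are genuinely exhausted.
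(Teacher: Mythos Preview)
Your approach is essentially the same as the paper's: bound $\delta_k$, enumerate the sextic fields from Voight's tables, compute $\zeta_k(-1)$, and test each candidate $(k,A)$. The one refinement worth noting is that the paper's case check (recorded in Table~\ref{grad6}) shows that the crude volume inequality $2\zeta_k(-1)\prod_{\pP\mid d_A}(N\pP-1)\leq 64$, together with the requirement $r\geq 2$, already rules out every candidate; the divisibility and torsion steps you prudently anticipate are never needed in dimension~$6$.
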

\begin{proof}
Assume that $X_{\Gamma}$ is an irreducible fake $(\PP^1)^6$ such that $\Gamma<\Gamma_{\OO}^1$ and $\OO^1$ is a maximal order in $A=A(k;6,6,d_A)$. According to Lemma \ref{leq6} the totally real number field $k$ is of degree $6$ with discriminant $d_k\leq (10.734)^6\approx1529570.6$. Finally, $vol(X_{\Gamma_{\OO}^1})=2\zeta_k(-1)\cdot \prod_{\pP\mid d_A}(N\pP-1)\leq 2^6$. As $\Gamma$ is cocompact, $A$ is a division algebra and therefore there must be at least one finite place of $k$ ramified in $A$ (by assumption, $A$ is unramified at all infinite places of $k$). Since the number of ramified places is even (by the reciprocity law for Hilbert-symbols \cite[Corollaire 3.3, p.~76]{VignerasAlgebres}), there must be at least two such finite places. In Table \ref{grad6} (see Section \ref{tables}), we collected the needed invariants of all totally real sextic fields with discriminant $<1529570.6$ which were computed with PARI. The list of these fields has been produced by J.~Voight (see \cite{Voighthomepage}). With the knowledge of all these values, case by case analysis shows that there is no totally indefinite division quaternion algebra $A$ over a sextic real field satisfying $2\zeta_k(-1)\cdot \prod_{\pP\mid d_A}(N\pP-1)\leq 2^6$. We note that the last condition bounds the value of $\prod_{\pP\mid d_A}(N\pP-1)=\prod_{\pP\mid d_A} (p^{f(\pP/p)}-1)$, with $f(\pP/p)$ the inertia degree of $\pP$, and thus restricts the set of possible prime ideals $\pP$ at which $A$ ramifies.          
\end{proof}

We will now focus on $4$-dimensional fake products. 

\begin{thm}
\label{candidates}
Let $ A=A(k;m,4,d_A)$ be a quaternion algebra over a totally real number field $k$ unramified at $4$ archimedean places of $k$ and $\OO$ a maximal order in $A$. Assume that $\Gamma<\Gamma_{\OO}^1$ is a finite index torsion free subgroup such that $X_{\Gamma}=\Gamma\backslash \HH^4$ is a fake product of projective lines. Then the pair $(k, d_A)$ (determining the isomorphism class of $A$ uniquely) belongs to the following list

\begin{table}[h!]
\label{deg4kandidaten}
\centering
\begin{tabular}{|c|c|c|c|}
\hline
$d_k$ & defining polynomial & $\zeta_k(-1)$ & $d_A$ \\
\hline
$1957$ & $x^4 - 4x^2 - x + 1$ &  $2/3$ & $\pP_3\pP_7$\\
\hline
$2000$ & $x^4 - 5x^2 + 5$ &  $2/3$ & $\pP_2\pP_5$ \\
\hline
$2304$ & $x^4 - 4x^2 + 1$ &  $1$ & $\pP_2\pP_3$\\
\hline
$38569 $ & $ x^5 - 5x^3 + 4x - 1$ & $-8/3$ & $\pP_7$\\
\hline
$106069$ & $ x^5-2x^4 -4x^3 + 7x^2 + 3x-4$ & $-16$ & $\pP_2$\\
\hline
$ 453789$ & $x^6-x^5-6x^4+6x^3+8x^2-8x+1$ & $ 16/3$ & $\emptyset$ \\
\hline
$ 1387029 $ & $x^6 - 3x^5 - 2x^4 + 9x^3 - x^2 - 4x + 1  $ & $ 32 $ & $\emptyset$ \\
\hline
$ 1397493 $ & $x^6 - 3x^5 - 3x^4 + 10x^3 + 3x^2 - 6x + 1  $ & $ 32 $ & $\emptyset$ \\
\hline
\end{tabular} 
\label{liste}  

\end{table}

There, the reduced discriminant $d_A=\prod \pP_p$ is a (possibly empty) formal product of \underline{suitably chosen} prime ideals $\pP_p$ lying over a rational prime $p$.  
\end{thm}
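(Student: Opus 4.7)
The plan is to imitate the proof of Theorem \ref{no6}, but now over all three degrees $m \in \{4,5,6\}$ permitted by Lemma \ref{leq6}. There are three stages: derive an explicit upper bound on the discriminant $d_k$ for each $m$; list the totally real fields $k$ of degree $m$ satisfying this bound; and, for each such field, enumerate the admissible reduced discriminants $d_A$.

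For the first stage, the assumption $\Gamma < \Gamma_{\OO}^1$ together with $e(X_\Gamma) = 2^4 = 16$ forces $vol(X_{\Gamma_{\OO}^1}) \leq 16$. Substituting the functional equation for $\zeta_k$ into the volume formula of Lemma \ref{euler_zahl}, as in (\ref{2hochn}), and using the elementary bounds $\zeta_k(2) > 1$ and $\prod_{\pP \mid d_A}(N\pP - 1) \geq 1$, yields $d_k^{3/2} \leq 2^{2m-1}\pi^{2m}$. Voight's tables \cite{Voighthomepage} then provide a complete finite list of totally real fields $k$ of degree $m \in \{4,5,6\}$ whose discriminant lies below the resulting bound.

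Next, for each candidate $k$ one computes $\zeta_k(-1)$ with PARI as in the proof of Theorem \ref{no6}. By Lemma \ref{euler_zahl} with $n=4$, the volume reads
\[
vol(X_{\Gamma_{\OO}^1}) = 2^{5-m}\,|\zeta_k(-1)|\,\prod_{\pP \mid d_A}(N\pP - 1),
\]
and must simultaneously satisfy $vol(X_{\Gamma_{\OO}^1}) \leq 16$ and $16/vol(X_{\Gamma_{\OO}^1}) \in \NN$, the latter being the generalized index $[\Gamma_{\OO}^1 : \Gamma]$. In addition one imposes the parity condition $m - n + r \equiv 0 \bmod 2$ on the number $r$ of finite ramified primes of $A$, and, when $m = n = 4$, the requirement $r \geq 2$ so that $A$ is a division algebra, which is necessary for cocompactness of $\Gamma$.

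The enumeration itself reduces to a finite mechanical check. For any rational prime $p$, a prime $\pP$ of $k$ above $p$ contributes a factor of size at least $p - 1$ to the product, so primes of residue characteristic beyond a small threshold are excluded by the size bound, and only finitely many combinations of small primes need to be tested. The main obstacle is organizational rather than conceptual: in degree $6$ there are many candidate fields (see Table \ref{grad6}), and for each one must generate enough prime ideals and evaluate the volume to eliminate all but a handful of $d_A$. After discarding every pair $(k,d_A)$ that violates either the size bound or the integrality of $16 / vol(X_{\Gamma_{\OO}^1})$, exactly the eight entries displayed in the table survive.
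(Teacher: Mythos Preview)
Your proof is correct and follows essentially the same route as the paper: bound $m\le 6$ via Lemma~\ref{leq6}, bound $d_k$ by the root-discriminant inequality $\delta_k\le f(m)$ (your $d_k^{3/2}\le 2^{2m-1}\pi^{2m}$ is equivalent), list the admissible fields from Voight's tables, and then enumerate the finitely many $d_A$ compatible with the integrality and parity constraints from Lemma~\ref{euler_zahl}. The only organizational difference is that the paper first filters fields by the divisibility condition $16/(2^{5-m}|\zeta_k(-1)|)\in\NN$ (recorded in Tables~\ref{grad4}--\ref{grad6}) before turning to $d_A$, whereas you test the full volume at once; both lead to the same finite check.
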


\begin{remark}
To specify what ``suitably chosen`` in the above Theorem \ref{candidates} means, let us consider an example. Let $k$ be the totally real field of degree $4$ and discriminant $d_k=38569$. Then there are two prime ideals of $k$ lying over $7$, but one with inertia degree $1$ and the other with inertia degree $4$ (see Table \ref{grad5}, Section \ref{tables}). Only the first prime ideal can be taken as the prime where $A$, the quaternion algebra defining a fake $(\PP^1)^4$, ramifies, but not the latter.
\end{remark}

\begin{proof}[Proof of Theorem \ref{candidates}]
Let $A$ and $\Gamma<\Gamma_{\OO}^1$ as above be given. Then by Lemma \ref{leq6}, $m=[k:\QQ]\leq 6$ and
the root discriminant $\delta_k$ satisfies $\delta_k\leq f(m)$ 
with the value $f(m)$ from the Table $\ref{fm}$.
We know that $e(X_{\Gamma})=16=[\Gamma_{\OO}^1:\Gamma]e(X_{\Gamma_{\OO}^1})=2^{5-m+1}\zeta_k(-1)\cdot \text{integer}$. Hence, $k$ satisfies the condition 
\begin{equation}
\label{integral}
\frac{16}{2^{5-m}\zeta_k(-1)}\in \NN.
\end{equation}
In the already mentioned Table \ref{grad6} from Section \ref{tables} we find all the sextic fields $k$ satisfying $\delta_k\leq f(6)$. Additionally, Tables \ref{grad5} and \ref{grad4} (see Section \ref{tables}) contain all the totally real quintic and quartic fields with $\delta_k\leq f(m)$ and satisfying the condition (\ref{integral}). As next, note that for each finite place $\pP$ of $k$ which divides the reduced discriminant $d_A$, the value $N\pP-1=p^{f(\pP/p)}-1$ divides $\frac{16}{2^{5-m}\zeta_k(-1)}$. The relevant values for $f(\pP/p)$ are given in Tables \ref{grad6}, \ref{grad5} and \ref{grad4}. Finally, for $m=4,5$, the quaternion algebra (which is assumed to be a skew field) has to be ramified at least at one finite place of $k$ and if $m=4$, $A$ ramifies at least at two finite places. 

\end{proof}

\section{Examples}
\label{examples}
Let $\xi=\xi_{20}$ be a primitive $20$-th root of unity, $K=\QQ(\xi)$ the corresponding cyclotomic field and $k=\QQ(\xi+\xi^{-1})=\QQ(\cos(\pi/10))$ the maximal totally real subfield of $K$. The field $k$ is defined by the polynomial $P(x)=x^4-5x^2+5$. We summarize some relevant facts about $k$.
\begin{lemma}
\label{zerlegung1}
The field $k$ is a totally real abelian quartic field of discriminant $d_k=2000$. The small rational primes have the following prime ideal decomposition in $k$:
\begin{itemize}
\item $2\OO_k=\pP_2^2$, where $\pP_2$ is a prime ideal in $\OO_k$ with norm $N\pP_2=4$
\item $3\OO_k=\pP_3$ is inert.
\item $5\OO_k=\pP_5^4$ is totally ramified in $k$.
\end{itemize}
The value of Dedekind zeta function $\zeta_k(s)$ at $s=-1$ is $\zeta_k(-1)=\frac{2}{3}$.
\end{lemma}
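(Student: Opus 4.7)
The plan is to identify $k$ as an explicit abelian subfield of $K=\QQ(\xi_{20})$ and read off each stated invariant from its character-theoretic description. Since $[K:\QQ]=\varphi(20)=8$, the subfield $k$ fixed by complex conjugation has degree $4$ over $\QQ$; it is abelian as a subfield of a cyclotomic field and totally real by construction. A direct calculation using $\xi^{10}=-1$ and $\alpha^2-2=\xi^2+\xi^{-2}$ shows that $\alpha=\xi+\xi^{-1}$ satisfies $P(x)=x^4-5x^2+5$, which is then its minimal polynomial over $\QQ$.

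For the discriminant I would apply the conductor-discriminant formula $d_k=\prod_{\chi}f(\chi)$, where $\chi$ runs over the characters of $\text{Gal}(k/\QQ)$, realized as those Dirichlet characters modulo $20$ that are trivial on $-1$. Using $(\ZZ/20)^{\ast}\cong(\ZZ/4)^{\ast}\times(\ZZ/5)^{\ast}$, the four even characters are the trivial one (conductor $1$), the quadratic character of conductor $5$, and two characters of order $4$ and conductor $20$, giving $d_k=1\cdot 5\cdot 20\cdot 20=2000$.

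The prime decompositions follow from Galois theory in the tower $\QQ\subset k\subset K$. For the unramified prime $p=3$, the Frobenius in $\text{Gal}(k/\QQ)$ is the image of $3\bmod 20$ in $(\ZZ/20)^{\ast}/\{\pm 1\}$; since the smallest $j$ with $3^{j}\equiv\pm 1\pmod{20}$ is $4$, the residue degree is $4$ and $3$ is inert. For $p=2$ and $p=5$ I would compute the inertia and decomposition subgroups inside $(\ZZ/20)^{\ast}$ explicitly: $I_2=\{1,11\}$ (elements $\equiv 1\bmod 5$) and $D_2=(\ZZ/20)^{\ast}$ (since the order of $2$ in $(\ZZ/5)^{\ast}$ is $4$); $I_5=\{1,9,13,17\}$ (elements $\equiv 1\bmod 4$) and $D_5=I_5$ (since $5\equiv 1\bmod 4$). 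The formulas $e(\pP/p)=|I_pH/H|$ and $f(\pP/p)=|D_pH/I_pH|$ with $H=\{\pm 1\}$ then yield $e=f=2,\ g=1$ at $p=2$ and $e=4,\ f=g=1$ at $p=5$, matching the claimed decompositions.

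Finally, $\zeta_k(-1)$ can be obtained by factoring $\zeta_k(s)=\prod_{\chi}L(s,\chi)$ over the four even characters enumerated above and evaluating each $L(-1,\chi)=-B_{2,\chi}/2$ via generalized Bernoulli numbers; this is the only genuinely mechanical step and, in line with the computational style of the paper, may alternatively be verified by a direct call to PARI/GP, yielding $\zeta_k(-1)=2/3$.
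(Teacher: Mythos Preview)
Your argument is correct and follows essentially the same route as the paper: both exploit that $k$ is the maximal real subfield of $\QQ(\xi_{20})$, read off the splitting of $2,3,5$ from cyclotomic ramification theory, and compute $\zeta_k(-1)$ via the factorisation $\zeta_k(s)=\prod_{\chi\text{ even}}L(s,\chi)$ and generalized Bernoulli numbers. The only noteworthy differences are cosmetic: you obtain $d_k=2000$ from the conductor--discriminant formula (the paper simply reads it from its tables), and you compute $e,f,g$ directly from the images of the inertia and decomposition subgroups in $(\ZZ/20)^{\ast}/\{\pm1\}$, whereas the paper first factors $p\OO_K$ in $K=\QQ(\xi_{20})$ and then descends using that $K/k$ is unramified at all finite places.
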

\begin{proof}
Either one consults the Table \ref{grad4} in Section \ref{tables} or one applies directly some of the well-known facts about cyclotomic fields. Let us briefly explain the second approach:\\ Let $\xi_n$ be a primitive $n$-th root of unity. Then, the extension $\QQ(\xi_n)/\QQ(\xi_n+\xi_n^{-1})$ is unramified at all the finite places of $k$ for $n$ not a prime power (\cite[Proposition 2.15]{WashingtonCF}). Also, a rational prime $p$ factorizes in $\QQ(\xi_n)$ as 
$$
p\OO_{\QQ(\xi_n)}=(\mathfrak Q_1\ldots\mathfrak Q_r)^{\varphi(p^{\nu_p(n)})}
$$
where all $\mathfrak Q_i$ are of residual degree $f_p=\min\{f\mid p^f\equiv 1\bmod n/p^{\nu_p(n)}\}$ and $\nu_p(\ )$ is the normalized $p$-valuation (\cite[Proposition I.(10.3)]{NeukirchANT}). Recall again the fundamental identity for prime ideals \cite[Proposition I.(8.2)]{NeukirchANT} which in this case states that $f_p\varphi(p^{\nu_p(n)})r=\varphi(n)$. It follows for $\xi=\xi_{20}$ that the principal ideal $2\OO_{\QQ(\xi)}=\ppP_2^2$ is a square of a prime ideal in $\OO_{\QQ(\xi)}$. As $\QQ(\xi)/k$ is unramified, the stated decomposition of $2$ in $\OO_k$ follows. The above shows also that $3$ is inert in $\OO_{\QQ(\xi)}$ hence inert in $\OO_k$. Finally, we find that $5\OO_{\QQ(\xi)}=(\ppP\ppP')^4$ is a product of two prime powers. Again, as the extension $\QQ(\xi)/k$ is unramified, the fundamental identity allows only the stated possibility for the decomposition of $5$ in $\OO_k$. For the computation of the zeta value, one can use the formula for $\zeta_k(-1)$ as the product of values $L(-1,\chi)$, where $\chi$ runs over all even Dirichlet characters modulo $20$ (a Dirichlet character $\chi$ is even if $\chi(-1)=1$) and the formula $L(-1,\chi)=-B_{2,\chi}/2$ in terms of the generalized Bernoulli number $B_{2,\chi}$ associated with $\chi$ (\cite[Thm. 4.2, 4.3]{WashingtonCF}). The general formula for the values $B_{2,\chi}$ can be found for instance in \cite[Prop. 4.1]{WashingtonCF}. If $\chi$ is a non-trivial even character with conductor $N$ there is a simple expression for $B_{2,\chi}$, namely $B_{2,\chi}=\frac{1}{N}\sum_{a=1}^N \chi(a)a^2$ (see for instance \cite[Exercise 4.2]{WashingtonCF}). Now, there are four even Dirichlet characters mod $20$: the trivial character $\chi_0$ with $B_{2,\chi_0}=1/6$, a real character $\chi_1$ with conductor $5$ given by $\chi_1(11)=1$, $\chi_1(17)=-1$ contributing $B_{2,\chi_1}=4/5$ and two conjugate even characters $\chi_2$, $\bar \chi_2$ with conductor $20$ defined by $\chi_2(11)=-1$, $\chi_2(17)=\sqrt{-1}$ contributing $B_{2,\chi_2}=4\sqrt{-1}+8$. Altogether we get $\zeta_k(-1)=\frac{1}{12}\frac{2}{5}(2\sqrt{-1}+4)(-2\sqrt{-1}+4)$=2/3.  
\end{proof}

Let now $A=A(k;4,4,\pP_2,\pP_5)$ be the quaternion algebra over $k$, ramified exactly at the two places $\pP_2$ and $\pP_5$. 

\begin{thm}
\label{example1}
Let $\OO\subset A(k;4,4,\pP_2,\pP_5)$ be a maximal order. Then, $X_{\Gamma_{\OO}^1}$ is a four-dimensional fake product of projective lines.
\end{thm}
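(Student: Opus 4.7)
By the corollary to Lemma~\ref{matsshim}, the plan is to check two things: that the Euler characteristic $e(X_{\Gamma_{\OO}^1})$ equals $2^4=16$, and that $\Gamma_{\OO}^1$ is torsion-free (so that $X_{\Gamma_{\OO}^1}$ is a smooth projective variety of general type).

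For the Euler characteristic I would apply Lemma~\ref{euler_zahl} with the arithmetic data of Lemma~\ref{zerlegung1}, namely $m=n=4$, $\zeta_k(-1)=2/3$, $N\pP_2=4$, $N\pP_5=5$, and generalized index equal to $1$. This immediately yields
\[
e(X_{\Gamma_{\OO}^1})=2^{4-4+1}\cdot\tfrac{2}{3}\cdot(4-1)\cdot(5-1)=2\cdot\tfrac{2}{3}\cdot 12=16.
\]

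The substantive step, and the main obstacle, is showing that $\Gamma_{\OO}^1$ is torsion-free. Any nontrivial torsion element lifts to a root of unity $\zeta_M\in\OO$ with $M\geq 3$, and since $A$ is a quaternion algebra over $k$, the field $k(\zeta_M)$ must be a quadratic CM extension of $k$ embedding into $A$; equivalently, $k(\zeta_M)$ must be non-split at every finite place where $A$ ramifies, i.e.\ at $\pP_2$ and $\pP_5$. I would first enumerate the $M\geq 3$ with $[k(\zeta_M):k]\leq 2$: using that $k$ is cyclic quartic with unique quadratic subfield $\QQ(\sqrt{5})$, a case analysis based on $k\cap\QQ(\zeta_M)\in\{\QQ,\QQ(\sqrt{5}),k\}$ leaves only $M\in\{3,4,5,6,10,20\}$, and the corresponding CM fields collapse to just two, namely $F_1=\QQ(\xi_{20})=k(i)=k(\zeta_5)=k(\zeta_{20})$ and $F_2=k(\zeta_3)=k(\sqrt{-3})$.

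It then remains to show that neither $F_1$ nor $F_2$ embeds into $A$. For $F_1/k$, the decomposition $5\OO_{\QQ(\xi_{20})}=(\ppP\ppP')^4$ from the proof of Lemma~\ref{zerlegung1}, combined with $5\OO_k=\pP_5^4$, forces $\pP_5$ to split into two primes in $F_1$, so $F_1$ splits $A$ at $\pP_5$ and does not embed. For $F_2=k(\sqrt{-3})$ the key observation is that the residue field of $\pP_2$ is $\FF_4$, which already contains the primitive cube roots of unity; Hensel's lemma applied to $x^2+x+1$ (whose derivative $2x+1$ is a $\pP_2$-adic unit at such a root) lifts $\zeta_3$ to $k_{\pP_2}$, proving that $\pP_2$ splits in $F_2/k$ and $F_2$ does not embed either. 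Together these rule out any $M\geq 3$, so $\OO$ contains no nontrivial roots of unity, $\Gamma_{\OO}^1$ is torsion-free, and $X_{\Gamma_{\OO}^1}$ is a fake $(\PP^1)^4$.
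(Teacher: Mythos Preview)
Your proof is correct and follows the same overall strategy as the paper: compute the Euler number via Lemma~\ref{euler_zahl} and Lemma~\ref{zerlegung1}, then show torsion-freeness by reducing to the two CM extensions $F_1=\QQ(\xi_{20})$ and $F_2=k(\zeta_3)$ and checking that each fails the Hasse embedding criterion at a ramified prime of $A$. Your treatment of $F_1$ (showing $\pP_5$ splits) is identical to the paper's.

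The one genuine difference is your argument for $F_2=k(\sqrt{-3})$. The paper embeds $k(\xi_3)$ into $\QQ(\xi_{60})$, works out the decomposition $2\OO_{\QQ(\xi_{60})}=\mathfrak Q_1^2\mathfrak Q_2^2$ with $f(\mathfrak Q_i/2)=4$, and then compares with the decomposition in $\QQ(\xi_{20})$ and in $k$ to conclude that $\pP_2$ splits in $k(\xi_3)$. You instead observe that the residue field $\OO_k/\pP_2\cong\FF_4$ already contains the cube roots of unity and lift via Hensel (the derivative $2x+1$ reduces to $1$ in characteristic~$2$), so $\zeta_3\in k_{\pP_2}$ and $\pP_2$ splits. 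Your route is shorter and more self-contained for this particular step; the paper's cyclotomic bookkeeping has the advantage of being uniform with the method already used for Lemma~\ref{zerlegung1} and the $F_1$ case. Either way the conclusion is the same.
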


\begin{proof}
By Lemma \ref{euler_zahl}, $e(X_{\Gamma_{\OO}^1})=2\zeta_k(-1)(N\pP_2-1)(N\pP_5-1)$, and Lemma \ref{zerlegung1} gives immediately the asked value $e(X_{\Gamma_{\OO}^1})=2\cdot\frac{2}{3}(2^2-1)(5-1)=16=2^4$. We need to prove that $X_{\Gamma_{\OO}^1}$ is smooth. For this we will exclude the existence of elements of finite order in $\Gamma_{\OO}^1$. Note that there exists a torsion element in $\Gamma_{\OO}^1$ if and only if there is an embedding of $\OO_k[\rho]$ into $\OO$, where $\rho$ is some primitive root of unity. In particular, every torsion corresponds to a (commutative) subfield $k(\xi)\subset A$. In order to show the claim we will exclude the possibility of an embedding $k(\rho)\hookrightarrow A$. It is sufficient to consider only primitive $p$-th roots of unity $\rho$, where $p$ is an odd prime or $p=4$. Since $[k:\QQ]=4$, and $k(\rho)/k$ is at most a quadratic extension, $k(\rho)\subset A$ is possible at most for $p=3,4$ or $5$. By the classical embedding theorem of Hasse \cite[Theoreme 3.8, p.78]{VignerasAlgebres}, $k(\rho)$ can be embedded in $A$ if and only if each prime $\pP\mid d_A$ is non-split in $k(\rho)$. Let us consider $\rho=\xi_3$ and show that $\pP_2\subset \OO_k$, the prime ideal over $2$, splits in the field $L=k(\xi_3)$. The field $L=k(\sqrt{-3})$ is a subfield of $\QQ(\xi_{60})$, and by the facts from the general theory of cyclotomic fields used in the proof of Lemma \ref{zerlegung1}, $2\OO_{\QQ(\xi_{60})}=\mathfrak Q_1^2\mathfrak Q_2^2$, where $\mathfrak Q_{1,2}$ are both of inertia degree $4$ over $2$. The primes $\mathfrak Q_1$ and $\mathfrak Q_2$ are Galois conjugate by the automorphism $\sqrt{-3}\mapsto -\sqrt{-3}$, since $2\OO_{\QQ(\xi_{20})}=\qQ^2$ with inertia degree $f(\qQ/2)=4$. Lemma \ref{zerlegung1} implies that $\pP_2$ is split in $k(\xi_3)$ and therefore $k(\xi_3)$ is not a subfield of $A$.\\
Consider as next $L=k(\rho)$ with $\rho=\xi_4$ or $\rho=\xi_5$. Then, $L=K=k(\xi_{20})$, and we can use same kind of arguments:\\ Namely, we know that $K/k$ is unramified at all the finite places and $5\OO_K=\mathfrak P^4\cdot\mathfrak P'^4$ with two prime ideals $\mathfrak P$, $\mathfrak P'$ in $K$ with $f(\mathfrak P/5)=f(\mathfrak P'/5)=1$. By Lemma \ref{zerlegung1}, $5\OO_k=\pP_5^4$, hence $\pP_5=\mathfrak P\mathfrak P'$ is split in $K$ and $K$ is not a subfield of $A$.
\end{proof}

Let us present another example. Let $\xi_{24}$ be a primitive $24$-th root of unity, $L=\QQ(\xi_{24})$ the corresponding cyclotomic field and $\ell=\QQ(\xi_{24}+\xi_{24}^{-1})$ the maximal totally real subfield of $L$. Then, as in the example before we have the following elementary result, which can be proved in the same way as Lemma \ref{zerlegung1}.
\begin{lemma}
\label{zerlegung2}
$\ell$ is a quartic field of discriminant $d_{\ell}=2304$. $\zeta_{\ell}(-1)=1$. We have the following prime ideal decomposition of rational primes in $\OO_{\ell}$:
\begin{itemize}
\item $2\OO_{\ell}=\pP_2^4$
\item $3\OO_{\ell}=\pP_3^2$
\item $5\OO_{\ell}=\pP_5\pP_5'$
\end{itemize}
\end{lemma}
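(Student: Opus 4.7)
The proof parallels that of Lemma \ref{zerlegung1}. Since $24$ is not a prime power, the extension $L/\ell$ is unramified at every finite place of $\ell$ (\cite[Proposition 2.15]{WashingtonCF}); the nontrivial element of $\operatorname{Gal}(L/\ell)$ is complex conjugation, corresponding to $23 \in (\ZZ/24)^{\ast} = \operatorname{Gal}(L/\QQ)$. The plan is to first factor each of the primes $p=2,3,5$ in $\OO_L$ via the standard cyclotomic formula $p\OO_L = (\mathfrak Q_1\cdots\mathfrak Q_r)^{\varphi(p^{\nu_p(24)})}$ with common residue degree $f_p = \min\{f\mid p^f \equiv 1 \bmod 24/p^{\nu_p(24)}\}$, and then descend through the unramified quadratic extension $L/\ell$ by tracking how complex conjugation acts on the primes above $p$.

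For $p=2$ we have $\varphi(8)=4$ and $f_2=\operatorname{ord}_3(2)=2$, so $2\OO_L=\mathfrak Q^4$ and the unique prime of $\ell$ below satisfies $2\OO_\ell=\pP_2^4$. For $p=3$ the cyclotomic formula gives $3\OO_L=(\mathfrak Q_1\mathfrak Q_2)^2$ with residue degree $2$; I would identify the decomposition group $D_3\subset(\ZZ/24)^{\ast}$ as the inertia subgroup $\{a\equiv 1\bmod 8\}=\{1,17\}$ together with a lift of $\operatorname{Frob}_3$, obtaining $D_3=\{1,11,17,19\}$. Since $23\notin D_3$, complex conjugation exchanges $\mathfrak Q_1$ and $\mathfrak Q_2$, which therefore restrict to a single prime of $\ell$, giving $3\OO_\ell=\pP_3^2$. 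For $p=5$ the factorisation is $5\OO_L=\mathfrak Q_1\mathfrak Q_2\mathfrak Q_3\mathfrak Q_4$ with the decomposition group $\langle 5\rangle=\{1,5\}$ failing to contain $23$, so complex conjugation pairs the four primes into two orbits, yielding $5\OO_\ell=\pP_5\pP_5'$.

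For the remaining numerical data I would use that $\ell$ is biquadratic: since $\sqrt 2 = \xi_{24}^3+\xi_{24}^{-3}$ and $\sqrt 3 = (\xi_{24}+\xi_{24}^{-1})^2-2$ both lie in $\ell$, we have $\ell=\QQ(\sqrt 2,\sqrt 3)$. The characters of $\operatorname{Gal}(\ell/\QQ)\cong(\ZZ/2)^2$ are the four even Dirichlet characters modulo $24$: the trivial character and the three quadratic characters $\chi_8,\chi_{12},\chi_{24}$ attached to $\QQ(\sqrt 2),\QQ(\sqrt 3),\QQ(\sqrt 6)$ of conductors $8,12,24$. The conductor-discriminant formula then yields $d_\ell=1\cdot 8\cdot 12\cdot 24 = 2304$. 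Finally, from $\zeta_\ell(s)=\zeta(s)L(s,\chi_8)L(s,\chi_{12})L(s,\chi_{24})$ together with $L(-1,\chi)=-B_{2,\chi}/2$ and the formula $B_{2,\chi}=\tfrac{1}{N}\sum_{a=1}^N\chi(a)a^2$ recalled in Lemma \ref{zerlegung1}, a short computation yields $B_{2,\chi_8}=2$, $B_{2,\chi_{12}}=4$, $B_{2,\chi_{24}}=12$, whence $\zeta_\ell(-1)=(-\tfrac{1}{12})(-1)(-2)(-6)=1$.

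The only non-routine step is the computation of the decomposition groups at $3$ and $5$ inside $(\ZZ/24)^{\ast}$ and the verification that $23\notin D_p$ in both cases; this fusion of primes when descending from $L$ to $\ell$ is precisely what distinguishes the stated decomposition type from the a priori alternatives such as $3\OO_\ell=\pP\pP'$ with residue degree $1$ or $5\OO_\ell=\pP_1\cdots\pP_4$.
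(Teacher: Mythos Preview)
Your argument is correct and follows exactly the template the paper indicates (``can be proved in the same way as Lemma~\ref{zerlegung1}''): factor $p$ in $\QQ(\xi_{24})$ via the standard cyclotomic formula, descend along the unramified quadratic extension $L/\ell$, and compute $\zeta_\ell(-1)$ as a product of $L(-1,\chi)$ over the even characters modulo $24$. Your explicit identification $\ell=\QQ(\sqrt 2,\sqrt 3)$ and use of the conductor--discriminant formula for $d_\ell$, together with the decomposition-group check $23\notin D_p$ to control the fusion of primes in the descent, are natural refinements that the paper leaves implicit (it simply refers to Table~\ref{grad4} or to the method of Lemma~\ref{zerlegung1}), but the overall route is the same.
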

\begin{thm}
 \label{example2}
Let $A=A(\ell;4,\pP_2,\pP_3)$ be the totally indefinite quaternion algebra over $\ell$ of reduced discriminant $d_A=\pP_2\pP_3$ and $\OO$ a maximal order in $A$. Then, $X_{\Gamma_{\OO}^1}$ is an irreducible four-dimensional fake product of projective lines. 
\end{thm}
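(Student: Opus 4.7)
My plan mirrors the structure of the proof of Theorem \ref{example1}: verify the Euler number, rule out torsion in $\Gamma_{\OO}^1$, and note that irreducibility is automatic from the arithmetic construction since $[\ell:\QQ]=4\geq 2$.

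\textbf{Euler number.} Applying Lemma \ref{euler_zahl} with $m=n=4$ gives
$$
e(X_{\Gamma_{\OO}^1}) = 2\,\zeta_{\ell}(-1)(N\pP_2-1)(N\pP_3-1).
$$
From Lemma \ref{zerlegung2} I read off $\zeta_\ell(-1)=1$ and, by the fundamental identity for prime ideals applied to $2\OO_\ell=\pP_2^4$ and $3\OO_\ell=\pP_3^2$, the residual degrees $f(\pP_2/2)=1$ and $f(\pP_3/3)=2$, so $N\pP_2=2$ and $N\pP_3=9$. This yields $e(X_{\Gamma_{\OO}^1})=2\cdot 1\cdot 1\cdot 8=16=2^4$, which by the corollary to Lemma \ref{matsshim} is exactly what is needed once smoothness is established.

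\textbf{Torsion-freeness.} Following the argument of Theorem \ref{example1}, any torsion in $\Gamma_{\OO}^1$ forces an embedding $\ell(\rho)\hookrightarrow A$ for some primitive $n$-th root of unity $\rho$; since $A$ is quaternion over a degree-$4$ field, this requires $[\ell(\rho):\ell]\leq 2$. I would first identify the admissible $n$. Using that $\ell=\QQ(\sqrt{2},\sqrt{3})$ is the unique real biquadratic subfield of $\QQ(\xi_{24})$, the intersection $\ell\cap\QQ(\xi_n)$ is easy to compute (discriminants are coprime for $n$ supported on primes other than $2,3$), and only $n\in\{3,4,8,12,24\}$ survive. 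Since $\sqrt{2},\sqrt{3}\in\ell$, each of the fields $\ell(\xi_3)=\ell(\sqrt{-3})$, $\ell(\xi_4)=\ell(i)$, $\ell(\xi_8)=\ell(i,\sqrt{2})$, $\ell(\xi_{12})=\ell(i,\sqrt{3})$ collapses to the single quadratic extension $L:=\ell(i)=\QQ(\xi_{24})$, and this $L$ is also $\ell(\xi_{24})$. Thus the problem reduces to showing that $L$ does not embed in $A$.

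\textbf{Applying Hasse.} By the classical embedding criterion, $L\hookrightarrow A$ iff both $\pP_2$ and $\pP_3$ are non-split in $L/\ell$; so it suffices to display one of them as split. Using the standard cyclotomic decomposition formulas as in the proof of Lemma \ref{zerlegung1} (with $n=24$), I get $2\OO_L=\mathfrak Q^4$ with $f(\mathfrak Q/2)=2$ and $3\OO_L=\mathfrak R_1^2\mathfrak R_2^2$ with $f(\mathfrak R_i/3)=2$. Because $L/\ell$ is unramified at every finite place ($24$ is not a prime power), combining with Lemma \ref{zerlegung2} forces $\pP_2\OO_L=\mathfrak Q$ (inert) but $\pP_3\OO_L=\mathfrak R_1\mathfrak R_2$ (split). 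Therefore $\pP_3$ splits in $L$, $L\not\hookrightarrow A$, and $\Gamma_{\OO}^1$ is torsion-free.

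The only delicate step is the systematic enumeration of the cyclotomic extensions $\ell(\xi_n)/\ell$ that could possibly be embedded in $A$; once one observes that all such candidates coalesce into the single field $L=\ell(i)\subset\QQ(\xi_{24})$, the Hasse criterion finishes the argument with one decomposition computation. The verification of the Euler number and the invocation of irreducibility are essentially bookkeeping.
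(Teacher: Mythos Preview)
Your proof is correct and follows essentially the same line as the paper's: compute the Euler number from Lemmas \ref{euler_zahl} and \ref{zerlegung2}, then exclude torsion by showing that the unique relevant cyclotomic extension $L=\QQ(\xi_{24})$ fails the Hasse embedding criterion because $\pP_3$ splits in $L/\ell$. The paper is slightly leaner in that it invokes the reduction (already used in the proof of Theorem \ref{example1}) to primitive $p$-th roots with $p$ an odd prime or $p=4$, so it only has to observe $\ell(\xi_3)=\ell(\xi_4)=\QQ(\xi_{24})$; your explicit enumeration $n\in\{3,4,8,12,24\}$ is redundant but harmless, and your identification $\ell=\QQ(\sqrt 2,\sqrt 3)$ makes the collapse to $L$ transparent.
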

\begin{proof}
The proof goes along the lines of the proof of Theorem \ref{example1}. The equality $e(X_{\Gamma_{\OO}^1})=2(2-1)(3^2-1)=16$ follows immediately from Lemma \ref{zerlegung2} and Lemma \ref{euler_zahl}. For proving the smoothness of $X_{\Gamma_{\OO}^1}$, note that $\ell(\xi_3)$ and $\ell(\xi_4)$ coincide with the cyclotomic field $\QQ(\xi_{24})$ where $\pP_3\subset \OO_{\ell}$, the prime ideal over $3$ is split. This shows that $\ell(\xi_3)$ and $\ell(\xi_4)$ cannot be contained in $A$. Thus, $X_{\Gamma_{\OO}^1}$ is smooth.     
\end{proof}

After giving the examples, let us show that all the other virtual candidates from Theorem \ref{candidates} do not give rise to a lattice of a fake product.

\begin{thm}
\label{noexample3}
Besides the two examples above, no other quaternion algebra $A$ contains a lattice of a fake $(\PP^1)^n$ which is a finite index subgroup of the norm-one group of a maximal order in $A$.
\end{thm}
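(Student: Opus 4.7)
The plan is to carry out a case-by-case analysis of the six entries $(k,d_A)$ in the list of Theorem \ref{candidates} not covered by Theorems \ref{example1} and \ref{example2}. The first step, for each candidate, is to substitute the tabulated value of $\zeta_k(-1)$ together with the prime norms $N\pP$, $\pP\mid d_A$, into Lemma \ref{euler_zahl} in order to compute $e(X_{\Gamma_{\OO}^1})=vol(X_{\Gamma_{\OO}^1})$; since $e(X_\Gamma)=2^4$ is mandatory and $[\Gamma_{\OO}^1:\Gamma]\cdot e(X_{\Gamma_{\OO}^1})=16$, this pins down the forced index $[\Gamma_{\OO}^1:\Gamma]$. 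A direct calculation shows that the five candidates with $d_k\in\{1957,\,38569,\,106069,\,1387029,\,1397493\}$ satisfy $e(X_{\Gamma_{\OO}^1})=16$, which forces $\Gamma=\Gamma_{\OO}^1$, while the sextic case $d_k=453789$, $d_A=\emptyset$ gives $e(X_{\Gamma_{\OO}^1})=8/3$, forcing $[\Gamma_{\OO}^1:\Gamma]=6$.

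In the five ``forced equality'' cases it suffices to exhibit torsion in $\Gamma_{\OO}^1$, and for this we invoke the Hasse embedding theorem exactly as in the smoothness arguments of Theorems \ref{example1} and \ref{example2}. The element $i$ gives an element of order $2$ in $\Gamma_{\OO}^1=\OO^1/\{\pm 1\}$ as soon as $k(i)\hookrightarrow A$, which by Hasse holds iff $-1$ is a non-square in every completion $k_\pP$ with $\pP\mid d_A$ (the conditions at the ramified archimedean places are automatic since $k$ is totally real). For $\pP$ of odd residue characteristic $p$, this reduces to $p\not\equiv 1\pmod 4$, which is satisfied at $\pP_3,\pP_7$ in the cases $d_k\in\{1957,38569\}$; for $\pP\mid 2$ with $k_\pP/\QQ_2$ unramified (which occurs in $d_k=106069$ because its discriminant is odd), the ramifiedness of $\QQ_2(i)/\QQ_2$ forces $-1\notin k_\pP^{\times 2}$; and in the remaining two cases $d_A=\emptyset$ ($d_k\in\{1387029,1397493\}$) the Hasse condition is vacuous. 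In each case $k(i)\hookrightarrow A$ and $\Gamma_{\OO}^1$ is not torsion-free, so cannot be the fundamental group of a fake product.

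For the sextic case $d_k=453789$, $d_A=\emptyset$, where $[\Gamma_{\OO}^1:\Gamma]=6$, Hasse's criterion is vacuous at finite places, so $k(\xi_p)\hookrightarrow A$ whenever $[k(\xi_p):k]=2$. To rule out a torsion-free subgroup $\Gamma$ of index $6$, we produce an element $\gamma\in\Gamma_{\OO}^1$ of prime order $p$ coprime to $6$: the left $\langle\gamma\rangle$-action on the six cosets $\Gamma_{\OO}^1/\Gamma$ has all orbits of size dividing $p$, and $\gcd(p,6)=1$ forces at least one fixed coset $g\Gamma$, whence $g^{-1}\gamma g\in\Gamma$ is a non-trivial element of finite order, contradicting the torsion-freeness of $\Gamma$. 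Since $d_k=3^3\cdot 7^5=49^2\cdot 189$, the cubic field $\QQ(\xi_7)^+$ (of discriminant $49$) is compatible with being a subfield of $k$; verifying this containment by an explicit polynomial computation gives $[k(\xi_7):k]=2$, hence $k(\xi_7)\hookrightarrow A$ and an element of $\Gamma_{\OO}^1$ of order $7$, as required.

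The main obstacle is the final verification that $\QQ(\xi_7)^+\subset k$ in the sextic case: unlike the elementary residue-field tests used to handle $k(i)$, this step requires either an explicit factorization of the defining polynomial of $k$ over $\QQ(\xi_7)^+$ (carried out in PARI) or a character-theoretic argument based on the ramification behavior of $k/\QQ$. A minor subtlety throughout is that Hasse's theorem produces a field embedding $k(\xi_p)\hookrightarrow A$ rather than an order embedding $\OO_k[\xi_p]\hookrightarrow\OO$; since $\OO$ is maximal, conjugation by a suitable element of $A^\times$ allows one to arrange $\OO_k[\xi_p]\subset\OO$, which suffices to produce the desired torsion element in $\Gamma_{\OO}^1$.
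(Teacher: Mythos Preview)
Your proof is correct and follows the same strategy as the paper: compute the forced index from Lemma~\ref{euler_zahl} in each of the six remaining cases, then exclude each by producing torsion in $\Gamma_{\OO}^1$ via the Hasse embedding theorem. The execution differs only in details: you verify the embedding of $k(i)$ by uniform local criteria where the paper does one explicit PARI computation (for $d_k=1957$) and asserts the rest are similar; and for $d_k=453789$ you embed $\xi_7$ after checking $\QQ(\xi_7)^+\subset k$, whereas the paper identifies $k=\QQ(\xi_{21}+\xi_{21}^{-1})$ outright and embeds $\xi_{21}$, getting divisibility of the index by $21$ rather than your $7$.

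One small correction: your claim that for odd $\pP$ the condition $-1\notin k_\pP^{\times 2}$ ``reduces to $p\not\equiv 1\pmod 4$'' is incomplete---the correct criterion is $N\pP\equiv 3\pmod 4$, which for $p\equiv 3\pmod 4$ additionally requires $f(\pP/p)$ odd. This does not affect your argument, since the volume computation you already carried out forces $N\pP_3=3$, $N\pP_7=7$ (i.e., $f=1$) in the relevant cases, but you should state it that way.
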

\begin{proof}
We will exclude the only possible quaternion algebras from Theorem \ref{candidates} by showing that the corresponding lattices are never torsion-free (if they exist). Consider for instance $A(k_{1957},4,4,\pP_3\pP_7)$ with $k_{1957}$ totally real quartic field with discriminant $1957$, which has the smallest discriminant among the possible fields. From the Table \ref{grad4} in Section \ref{tables} we deduce that $vol(\Gamma_{\OO}^1)=\frac{4}{3}\cdot (3-1)(7-1)=16$, hence $\Gamma_{\OO}^1$ is the only candidate for a lattice of a $4$-dimensional fake product. But we can show that $\Gamma_{\OO}^1$ contains an element of order $2$ which comes from an embedding of $\xi_4$ into $\OO$. Namely, let $K=k_{1957}(\xi_4)$, then $K$ is a totally complex field of discriminant $980441344=2^8 19^2 103^2$ defined by $x^8-4x^6+2x^5+16x^4-12x^3+ 25x^2-12x+37$ (PARI). Considering the factorization of this polynomial modulo $3$, we find that the rational prime $3$ factorizes in $K$ as $\qQ_3\qQ_3'$ as a product of two prime ideals. Since $3$ is also product $\pP_3\pP_3'$ of two prime ideals in $k$ (compare Table \ref{grad4}), $\pP_3$ remains prime in $K$ and hence by the embedding theorem \cite[Theoreme 3.8, p.78]{VignerasAlgebres} $\xi_4$ can be embedded into $A$. But this embedding can even be chosen in such a way that $\xi_4$ lies in $\OO$ \cite[Proposition 2.8]{Shimura67}, so that $\xi_4$ gives rise to a torsion in $\Gamma_{\OO}^1$. In all the other cases, except $A=A(k_{453789}, 6,4,\emptyset)$, the same kind of arguments work: the only possible lattice is $\Gamma_{\OO}^1$ itself, but which is never torsion-free. In case $A=A(k_{453789}, 6,4,\emptyset)$, where $k_{453789}$ is the totally real sextic field of discriminant $453789$, the possible lattice of a 4-dimensional fake product is a subgroup of index $6$ in $\Gamma_{\OO}^1$.  Now, recall the following elementary group theoretic result (see \cite[IX 9.2]{BrownCohomology} for a proof of a more precise result): If $H, G'<G$ are groups with $H$ finite and $G'$ torsion-free, then the index $[G:G']$ is divisible by $|H|$. On the other hand, $k_{453789}=\QQ(\xi_{21}+\xi_{21}^{-1})$ is the maximal totally real number field of the cyclotomic field $\QQ(\xi_{21})$. Since $\QQ(\xi_{21})$ is totally imaginary quadratic extension of $k_{453789}$ and $A$ is unramified at all the finite places, $\xi_{21}$ can be embedded into $\OO$ and the index of a torsion-free subgroup in $\Gamma_{\OO}^1$ must be divisible by $21$ which contradicts the assumption $[\Gamma_{\OO}^1:\Gamma]=6$.    
\end{proof}
\begin{remark}
More precisely we can say that up to isomorphism, the two examples of fake $(\PP^1)^4$ are the only examples of irreducible fake $(\PP^1)^n$ with $n\geq 4$ whose fundamental group is contained in $\Gamma_{\OO}^1$. This follows from the fact that there is a single conjugacy class of maximal orders inside $A(\QQ(\xi_{20}+\xi_{20}^{-1}), 4,4,\pP_{2}\pP_5)$ and $A(\QQ(\xi_{24}+\xi_{24}^{-1}), 4,4,\pP_{2}\pP_3)$ since both fields have class number one (see \cite[Section 6.7, (6.13)]{MaclachlanReid03}).  
\end{remark}

\section{Tables} 
\label{tables}
In this section we list the tables of invariants of quaternion algebras related to lattices of fake products of projective lines used in previous sections. The invariants are obtained as follows. First we collect all defining polynomials of number fields of degree $m\leq 6$ with root discriminant $< f(m)$ which can be found in \cite{NF} and \cite{Voighthomepage}. The PARI procedure \verb+nfinit->idealprimedec+ gives  the decomposition of small rational primes in $k$. We use the PARI procedure \verb+zetakinit->zetak+ to compute the values $\zeta_k(-1)$. More precisely, PARI apriori computes an approximation of the true value of $\zeta_k(2)$ by computing the truncated Euler product $\prod_{p\leq x}\prod_{\pP\mid p} (1-N\pP^{-2})^{-1}$. The accuracy depends on the choice of $x$ but the precision of $5-10$ decimal digits is certain and this turns out to be enough in given cases. We can namely control the result knowing some properties of the zeta values $\zeta_k(-1)$. Recall, that by Theorem of Klingen-Siegel (\cite{Siegel1969}) $\zeta_k(-1)$ is a rational number and moreover we have information on primes dividing the denominator of $\zeta_k(-1)$ and its size. By \cite[p. 89]{Siegel1969} we know that the denominator of $2^{m}\zeta_k(-1)$ divides a certain integer $c_{2m}$ which in given cases is product of small primes (if $m=4$ then $p=2,3,5$). 

Hence we have an upper bound $B$ such that $B\zeta_k(-1)$ is an integer.  
The functional equation implies that $B \zeta_k(-1)=B\zeta_k(2)\cdot (-1)^m2^{-m}\pi^{-2m}d_k^{3/2}$ is an integer. So with an approximation $Z_k(2)$ we would take the closest integer to $BZ_k(2)\cdot (-1)^m2^{-m}\pi^{-2m}d_k^{3/2}$ to obtain the true value. Consider for instance $k$ the quartic field with discriminant $d_k=725$. PARI gives us the value $Z_k(2)=1.0369329880\ldots$. By Siegel's theorem we know that only $2,3,5$ can divide the denominator of $\zeta_k(-1)$. Hence from the value $30Z_k(2)2^{-4}\pi^{-8}725^{3/2}=3.9999999997$ we find that $\zeta_k(-1)=4/30=2/15$ is the correct value. 

\begin{table}[h]
\centering

\begin{tabular}{|@{}c@{}|@{}c@{}|@{}c@{}|@{}c@{}|@{}c@{}|@{}c@{}|@{}c@{}|@{}c@{}|@{}c@{}|}
\hline
$d_k$  & defining polynomial & $\zeta_k(-1)$ &  $f(\pP/2)$ & $f(\pP/3)$ & $f(\pP/5)$ & $f(\pP/7)$ & $f(\pP/11)$ & $f(\pP/13)$\\
\hline
$725 $ & $x^4 - x^3 - 3x^2 + x + 1$ 	&  $2/15$ & $4$ & $4$ & $2$ & $2$ ; $2$ & $1$ ; $1$ ; $2$ & $2$ ; $2$\\
\hline
$1125$ & $x^4 - x^3 - 4x^2 + 4x + 1$	&  $4/15$ & $4$ & $2$ & & $4$ & $2$ ; $2$ & \\
\hline
$1957$ & $x^4 - 4x^2 - x + 1$		&  $2/3$ & $4$ & $1$ ; $3$ & $4$ & $1$ ; $3$ & & \\
\hline
$2000$ & $x^4 - 5x^2 + 5$ 		&  $2/3$ & $2$ & $4$ & $1$ & $4$ & & $4$\\
\hline
$2225$ & $x^4 - x^3 - 5x^2 + 2x + 4$	&  $4/5$ & $2$ ; $2$ & $4$ & & & $2$ ; $2$ & \\
\hline
$2304$ & $x^4 - 4x^2 + 1$		&  $1$ & $1$ & $2$ & $2$  $2$ & & & \\
\hline
$2525$ & $x^4 - 2x^3 - 4x^2 + 5x + 5$	&  $4/3$ & $4$ & $4$ & $1$ ; $1$ & $4$ & & \\
\hline
$2624$ & $x^4 - 2x^3 - 3x^2 + 2x + 1$	&  $1$ & $2$ & $4$ & & $1$ ; $1$ ; $2$ & & \\
\hline
$2777$ & $x^4 - x^3 - 4x^2 + x + 2$	&  $4/3$ & $1$ ; $3$ & $4$ & & $4$ & & \\
\hline
$3600$ & $x^4 - 2x^3 - 7x^2 + 8x + 1$	&  $8/5$ & & & & & & \\
\hline
$3981$ & $x^4 - x^3 - 4x^2 + 2x + 1$	&  $2$ & $4$ & $1$ ; $2$ & $1$ ; $3$ & & & \\
\hline
$4205$ & $x^4 - x^3 - 5x^2 - x + 1$	&  $2$ & $4$ & $4$ & $1$ ; $2$ & & & \\
\hline
$4352$ & $x^4 - 6x^2 - 4x + 2$		& $8/3$ & $1$ & $4$ & & & & \\
\hline
$9909$ & $x^4 - 6x^2 - 3x + 3$		& $8$ & $4$ & & & & & \\
\hline
$10512$ & $x^4 - 7x^2 - 6x + 1$		& $8$ & $2$ & & & & & \\
\hline
\end{tabular}

\caption{Totally real quartic fields with root discriminant $\leq 10.3$ and with the property that $16/2\zeta_k(-1)$ is an integer. Each entry in the column $f(\pP/p)$ is the inertia degree of a prime ideal $\pP$ dividing $p\OO_k$; the number of entries is the number of prime ideals dividing $p\OO_k$.}\label{grad4}  
\end{table}

\begin{table}[ht]
\centering
\begin{tabular}{|@{}c@{}|@{}c@{}|@{}c@{}|@{}c@{}|@{}c@{}|@{}c@{}|@{}c@{}|@{}c@{}|@{}c@{}|@{}c@{}|}
\hline
$d_k$ & defining polynomial & $\zeta_k(-1)$ & $f(\pP/2)$ & $f(\pP/3)$ & $f(\pP/5)$ & $f(\pP/7)$ & $f(\pP/11)$ & $f(\pP/13)$\\
\hline
$ 24217$ & $ x^5 - 5x^3 - x^2 + 3x + 1$ & $-4/3 $ & $5$ & $5$ & $1$ ; $4$ & $5$ & $2$ ; $3$ & $2$ ; $3$\\
\hline
$ 36497$ & $x^5 - 2x^4 - 3x^3 + 5x^2 + x - 1$ & $-8/3$ & $5$ & $5$ & $2$ ; $3$ & $2$ ; $3$ & &\\
\hline
$38569 $ & $ x^5 - 5x^3 + 4x - 1$ & $-8/3$ & $5$ & $5$ & $5$ & $1$ ; $4$ & &\\
\hline
$81509$ & $x^5 - x^4 - 5x^3 + 3x^2 + 5x - 2$ & $-32/3$ & & & & & &\\
\hline
\end{tabular} 
\caption{Totally real quintic fields with root discriminant $\leq 10.570$ and with the property that $16/\zeta_k(-1)$ is an integer. Each entry in the column $f(\pP/p)$ is the inertia degree of a prime ideal $\pP$ dividing $p\OO_k$; the number of entries is the number of prime ideals dividing $p\OO_k$.}\label{grad5}  
\end{table}

\begin{table}[ht]
\centering
\begin{tabular}{|@{}c@{}|@{}c@{}|@{}c@{}|@{}c@{}|@{}c@{}|@{}c@{}|@{}c@{}|@{}c@{}|}
\hline
$d_k$ & defining polynomial & $\zeta_k(-1)$ & $f(\pP/2)$ & $f(\pP/3)$ & $f(\pP/5)$ & $f(\pP/7)$ & $f(\pP/11)$\\
\hline
$300125$ & $x^6-x^5-7x^4+2x^3+7x^2-2x-1$ & $296/105$ & $6$ &  $6$ & $3$ & $2$ & $3$ ; $3$\\
\hline
$371293$ & $ x^6-x^5-5x^4+4x^3+6x^2-3x-1$ & $152/39$ & $6$ & $3$ ; $3$ & $2$ ;$2$ ; $2$ & $6$ &\\
\hline
$434581$ & $ x^6-2x^5-4x^4+5x^3+4x^2-2x-1$ & $104/21$ & $6$ & $3$ ; $3$ & $3$ ; $3$ & $2$ &\\
\hline
$ 453789$ & $x^6-x^5-6x^4+6x^3+8x^2-8x+1$ & $ 16/3$ & $6$ & $3$ & $3$ ; $3$ & $1$ &\\
\hline
$ 485125$ & $ x^6-2x^5-4x^4+8x^3+2x^2-5x+1$ & $88/15$ & $6$ & $2$ ; $4$ & $3$ & &\\
\hline
$ 592661$ & $x^6-x^5-5x^4+4x^3+5x^2-2x-1 $ & $ 8 $ & $6$ & $6$ & $2$ ; $4$ & &\\
\hline
$ 703493$ & $ x^6-2x^5-5x^4+11x^3+2x^2-9x+1$ & $72/7 $ & $6$ & $6$ & & &\\
\hline
$ 722000$ & $ x^6-x^5-6x^4+7x^3+4x^2-5x+1 $ & $56/5$ & $2$ & $6$ & & &\\
\hline
$ 810448$ & $ x^6 -3x^5-2x^4+9x^3-5x+1 $ & $ 40/3 $ & $2$ & $3$ ; $3$ & & &\\
\hline
$ 820125$ & $ x^6 - 9x^4 - 4x^3 + 9x^2 + 3x - 1 $ & $ 584/45 $ & $6$ & $2$ & & &\\
\hline
$ 905177$ & $ x^6-x^5-7x^4+9x^3+7x^2-9x-1 $ & $ 320/21$ & $3$ ; $3$ & $6$ & & &\\
\hline
$ 966125$ & $ x^6-x^5-6x^4+4x^3+8x^2-1 $ & $ 256/15 $ & $6$ &  & & &\\
\hline
$980125$ & $x^6-x^5-6x^4+6x^3+7x^2-5x-1 $ & $256/15$ & $6$ &  & & &\\
\hline
$ 1075648$ & $x^6 - 7x^4 + 14x^2 - 7$ & $416/21$ & $3$ &  &  & &\\
\hline
$ 1081856$ & $ x^6-6x^4-2x^3+7x^2+2x-1$ & $ 20 $ & $3$ &  &  & &\\
\hline
$ 1134389$ & $x^6-2x^5-4x^4 + 6x^3 + 4x^2 - 3x - 1 $ & $ 64/3 $ & $6$ &  & & &\\
\hline
$ 1202933$ & $x^6 - 6x^4 - 2x^3 + 6x^2 + x - 1 $ & $ 24 $ & $6$ &  & & & \\
\hline
$ 1229312$ & $x^6 - 10x^4 + 24x^2 - 8 $ & $ 172/7 $  & $3$ &  & & &\\
\hline
$ 1241125$ & $x^6 - 7x^4 - 2x^3 + 11x^2 + 7x + 1 $ & $ 376/15 $ & $6$ &  & & &\\
\hline
$ 1259712$ & $x^6 - 6x^4 + 9x^2 - 3 $ & $ 248/9 $  & $3$ & &  & & \\
\hline
$ 1279733 $ & $x^6 - 2x^5 - 6x^4 + 10x^3 + 10x^2 - 11x - 1 $ & $ 544/21 $ & $6$ & & & &\\
\hline
$ 1292517 $ & $x^6 - 6x^4 - x^3 + 6x^2 - 1 $ & $ 232/9 $ & $6$ &  & & & \\
\hline
$ 1312625 $ & $x^6 - x^5 - 7x^4 + 7x^3 + 12x^2 - 12x - 1 $ & $ 416/15 $ & $2$ ; $4$ &  & & &\\
\hline
$ 1387029 $ & $x^6 - 3x^5 - 2x^4 + 9x^3 - x^2 - 4x + 1  $ & $ 32 $ & $6$ &  & & &\\
\hline
$ 1397493 $ & $x^6 - 3x^5 - 3x^4 + 10x^3 + 3x^2 - 6x + 1  $ & $ 32 $ & $6$ &  & & &\\
\hline
$ 1416125 $ & $x^6-2x^5-5x^4+9x^3+6x^2-9x+1  $ & $ 152/5 $  & $2$ & & & &\\
\hline
$ 1528713 $ & $x^6-3x^5-3x^4+7x^3+3x^2-3x-1 $ & $ 304/9 $ & $3$ ; $3$ & & & &\\
\hline
\end{tabular} 
\caption{Totally real sextic fields with root discriminant $\leq 10.734$. The entries should be read as follows: Each entry in the column  $f(\pP/p)$ is the inertia degree of a prime ideal $\pP$ dividing $p\OO_k$; the number of entries is the number of prime ideals dividing $p\OO_k$.}\label{grad6}  
\end{table}
\clearpage

\bibliographystyle{amsplain}

\end{document}